\newcommand{\lvt}{\left|\kern-1.35pt\left|\kern-1.3pt\left|}
\newcommand{\rvt}{\right|\kern-1.3pt\right|\kern-1.35pt\right|}
\newtheorem{thm}{Theorem}[section]
\newtheorem{lem}[thm]{Lemma}
\newtheorem{exam}[thm]{Example}
\theoremstyle{remark}
\newtheorem{rem}{Remark}[section]
 \def\la{{\langle}}
 \def\ra{{\rangle}}
 \def\ve{{\varepsilon}}
 \def\d{\mathrm{d}}
 \def\i{\mathrm{i}}
 \def\sph{{\mathbb{S}^{d-1}}}
 \def\sd{{\mathsf d}}
 \def\sw{{\mathsf w}}
 \def\sB{{\mathsf B}}
 \def\sW{{\mathsf W}}
 \def\a{{\alpha}}
 \def\b{{\beta}}
 \def\k{{\kappa}}
 \def\t{{\theta}}
 \def\l{{\lambda}}
 \def\la{{\langle}}
 \def\ra{{\rangle}}
 \def\ve{{\varepsilon}}
 \def\xb{{\mathbf x}}
 \def\CD{{\mathcal D}}
 \def\CH{{\mathcal H}}
 \def\CP{{\mathcal P}}
 \def\CV{{\mathcal V}}
 \def\BB{{\mathbb B}}
 \def\CC{{\mathbb C}}
 \def\NN{{\mathbb N}}
 \def\RR{{\mathbb R}}
 \def\SS{{\mathbb S}}
      \def\proj{\operatorname{proj}}
\def\lla{\langle{\kern-2.5pt}\langle}
\def\rra{\rangle{\kern-2.5pt}\rangle}
\def\bk{{\boldsymbol{\kappa}}}
\newcommand{\wh}{\widehat}
\def\f{\frac}
\begin{document}

\title{On Bernstein inequalities on the unit ball}
\author{Tomasz Beberok}
\address{Department of Applied Mathematics, University of Agriculture in Krakow, Poland}
\email{tomasz.beberok@urk.edu.pl}
\author{Yuan~Xu}
\address{Department of Mathematics, University of Oregon, Eugene,
OR 97403--1222, USA}
\email{yuan@uoregon.edu}
\thanks{The first author was supported by the Polish National Science Centre (NCN) Miniatura grant no.
2025/09/X/ST1/00082. The second author was partially supported by Simons Foundation Grant \#849676}
\date{\today}
\subjclass[2010]{33C45, 42C05, 42C10}
\keywords{Bernstein inequality, weighted, $L^p$, unit ball}
\keywords{}

\begin{abstract}
Two types of Bernstein inequalities are established on the unit ball in $\RR^d$, which are stronger than
those known in the literature. The first type consists of inequalities in $L^p$ norm for a fully symmetric doubling
weight on the unit ball. The second type consists of sharp inequalities in $L^2$ norm for the Jacobi weight, which
are established via a new self-adjoint form of the spectral operator that has orthogonal polynomials as eigenfunctions.
\end{abstract}

\maketitle

\section{Introduction}
\setcounter{equation}{0}

A set of new Bernstein inequalities was discovered on the triangle and simplex in \cite{X23, GX}, which are stronger
than the classical result and are somewhat unexpected. Following the lead, we revisit Bernstein inequalities in weighted
$L^p$ norm on the unit ball $\BB^d =\{x: \|x\| \le 1\}$ of $\RR^d$, which have been studied and utilized by many authors;
see \cite{B1, B2, Dai, DaiX1, DaiX, K2009, K2022, K2023, LWW, PX, Wang, X05} and the reference therein.
The classical weight function on the unit ball is of the form
$$
  W_\mu(x) = (1-\|x\|^2)^{\mu}, \qquad x \in \BB^d,  \quad  \mu > -1.
$$
For this radial weight function on the domain that has radial symmetry, several Bernstein inequalities in the literature are
modeled after the classical Bernstein inequality on the interval $[0,1]$, as can be seen, for example, by the inequality
for the $i$-th partial derivative $\partial_i$,
\begin{equation} \label{eq:B1}
  \left \|\varphi^r \partial_i^r f \right \|_{L^p(W_\mu, \BB^d)} \le c \, n^r \|f \|_{L^p(W_\mu, \BB^d)}, \quad 1 \le p \le \infty,
\end{equation}
where $\varphi(x) = \sqrt{1-\|x\|^2}$, $r$ is a positive integer, and $f$ is any polynomial of degree at most $n$. This
appears to be natural and what it should be, since the inequality reduces to, after all, the classical weighted Bernstein
inequality on the interval $[0,1]$ when $d=1$. One of our main results in this work shows, however, the following
inequality holds,
$$
  \left \|\Phi_i ^r \partial_i^r f \right \|_{L^p(W_\mu, \BB^d)} \le c \, n^r \|f \|_{L^p(W_\mu, \BB^d)}, \quad 1 \le p \le \infty,
$$
where $\Phi_i$ is defined by
\begin{equation} \label{eq:B2}
  \Phi_{i}(x) =\frac{ \sqrt{1-\|x\|^2}}{\sqrt{x^2_{i}+1-\|x\|^2}} = \frac{\varphi(x)} {\sqrt{x^2_{i}+1-\|x\|^2}}.
\end{equation}
Since $\varphi(x) \le \Phi_i(x)$ for $x\in \BB^d$, our new inequality is stronger than \eqref{eq:B1}.
{Moreover, for certain monomials, the inequality \eqref{eq:B2} is stronger by a factor of a magnitude
$n^{r/2}$ for $n$ large.} This is surprising,
given how classical approximation theory on the unit ball is, and it is only one of several other inequalities of the
same nature. Moreover, the inequality \eqref{eq:B1} holds not only for $W_\mu$ but for a large class of doubling
weights on the unit ball.

The new Bernstein inequalities on the ball are analogs of the inequalities established in \cite{GX} for the simplex.
The latter also contains a function like $\Phi_i$ in \eqref{eq:B1}, which provides improvement over the classical
Bernstein inequalities. In both cases, the associated function appears naturally in the Bernstein inequality derived
using plurisubharmonic functions, developed in \cite{Baran}, in which $\Phi_i$ and its alike on the simplex appear
naturally as the reciprocal of the Dini derivative of an extremal function. The connection, which provides an
interpretation for the new Bernstein inequalities, will be described in the next section.

One possible approach to prove the new inequalities, such as \eqref{eq:B1}, is to follow the proof in \cite{GX} by
using the highly localized kernels developed in \cite{PX, X23}, which requires technique and tedious estimates.
We decide to choose a different approach and derive the Bernstein inequalities on the unit ball from those
established on the simplex by making use of the close relation between analysis on the unit ball and on the
simplex (cf. \cite{X06} as well as \cite{DaiX, DX}). Since the mapping from the simplex to the ball involves squaring
each coordinate, we need $2^d$ maps, one to each quadrant of the ball, so that we can recover all polynomials
on the ball. As each map has a distinct Jacobian, the weight on the simplex for each quadrant of the ball has to
be different. As a result, it is not entirely obvious how to put the pieces together at first sight. Hence, the proof
remains non-trivial and is of interest in its own.

Another class of new Bernstein inequalities that we shall prove is the sharp inequalities in the $L^2$ norm on the ball.
Such inequalities have been studied recently in \cite{K2023}, where it was shown that the sharp Bernstein inequality for
polynomials of odd degree is different, and slightly better, from the one for polynomials of even degree. The proof of
these inequalities can be carried out through the spectral operator, which is a second-order differential operator that has
associated orthogonal polynomials as eigenfunctions. The key step is to write the spectral operator in a self-adjoint form,
which is known for the classical weight function on the simplex and on the ball. It turns out that the self-adjoint form of the
spectral operator is not unique, which was first realized in \cite{GX} for the operator on the simplex, and we shall give a
new one for the operator on the unit ball, which leads to a new family of Bernstein inequalities on the ball that are
also sharp.

The paper is organized as follows. The next section is the preliminary, in which we recall basic results about orthogonal
polynomials on the unit ball and simplex, recall the Bernstein inequality on the simplex, and discuss their connection with
the extremal function associated with plurisubharmonic functions. Section three is devoted to establishing the weighted
$L^p$ Bernstein inequalities. Finally, the sharp $L^2$ inequalities are stated and proved in Section 4.

Throughout this paper, we denote by $c$, or $c'$ etc, a positive constant that depends only on fixed parameters,
whose value may change from line to line.

\section{Preliminary}
\setcounter{equation}{0}

We recall background and essential results on the unit ball and on the simplex. The first subsection is devoted to
orthogonal polynomials on the unit ball, and the second subsection discusses the relation between orthogonal polynomials
on the ball and on the simplex. The third subsection reviews the Bernstein inequalities on the simplex established recently
in \cite{GX}, and discusses their connection with an extremal function that is related to the Bernstein inequalities established
via plurisubharmonic functions.

\subsection{Orthogonal polynomials on the unit ball}
Let $\BB^d = \{x \in \RR^d: \|x\| \le 1\}$ be the unit ball in the $d$-dimension Euclidean space $\RR^d$, where
$\|\cdot\|$ denote the Euclidean norm of $x \in \RR^d$. The classical weight function $W_{\mu}$ on the unit ball
$\mathbb{B}^{d}$ is defined by
$$
   W_{\mu}(x)=\left(1-\|x\|^{2}\right)^{\mu}, \quad \mu> -1.
$$
The classical orthogonal polynomials on $\BB^d$ are orthogonal with respect to the inner product $\la \cdot, \cdot \ra_\mu$
of $L^2(W_\mu, \BB^d)$, defined by
$$
   \la f,g \ra_\mu =  \int_{\BB^d} f(x) g(x) W_\mu(x) \d x.
$$
Let $\mathcal{V}_{n}(W_{\mu}, \BB^d)$ denote the space of orthogonal polynomials of degree $n$ with respect
to the weight function $W_{\mu}$ on $\mathbb{B}^{d}$. It is well-known that $\dim \CV_n(W_\mu,\BB^d) = \binom{n+d-1}{n}$.
The space $\CV_n(W_{\mu}, \BB^d)$ has several orthogonal bases that can be given explicitly.

Parametrizing the integral over $\BB^d$ in Cartesian coordinates, an explicit basis can be given in terms of the
Gegenbauer polynomials, denoted by $C_n^\l$, which are polynomials orthogonal with respect to $(1-t^2)^{\l-\f12}$
on $[-1,1]$ for $\l > -\f12$. More precisely, associated with $x = (x_1, \ldots, x_d) \in \mathbb{R}^d$, define by $\xb_j$
a truncation of $x$, namely
\[
\mathbf{x}_0 = 0, \qquad
\mathbf{x}_j = (x_1, \ldots, x_j), \qquad 1 \le j \le d.
\]
Note that \( \mathbf{x}_d = x \). Associated with \( \alpha = (\alpha_1, \ldots, \alpha_d) \), define
\[
\alpha^j := (\alpha_j, \ldots, \alpha_d), \qquad 1 \le j \le d,
\qquad \text{and} \qquad \alpha^{d+1} := 0.
\]
For $\alpha \in \mathbb{N}_0^d$, let $|\a| = \a_1+\ldots + \a_d$, and define the polynomials $P_\alpha$ by
\begin{align}\label{secondb}
P_\alpha(W_\mu; x)  = \prod_{j=1}^d (1 - \| \mathbf{x}_{j-1} \|^2)^{\alpha_j / 2}  C_{\alpha_j}^{\lambda_j}\!\bigg(
      \frac{x_j}{\sqrt{1 - \| \mathbf{x}_{j-1} \|^2}} \bigg),
\end{align}
where $\lambda_j = \mu + |\alpha^{j+1}| + \frac{d - j+1}{2}$. Then $\{P_\a(W_\mu): |\a| = n\}$ is an orthogonal basis of
$\mathcal{V}_n(W_\mu, \BB^d)$ \cite[Proposition 5.2.2]{DX}.

Using the spherical-polar coordinates to parametrize the integral over $\BB^d$, another orthogonal basis of
$\CV_n(W_\mu, \BB^d)$ can be given in terms of the Jacobi polynomials, $P_n^{(\a,\b)}$, which are polynomials
orthogonal with respect to $(1-x)^\a(1+x)^\b$ on $[-1,1]$, and the spherical harmonics. The latter are the restrictions
of homogeneous harmonic polynomials on the unit sphere $\mathbb{S}^{d-1}$, and they are OPs on the unit sphere
$\mathbb{S}^{d-1}$ of $\RR^d$. Let $\mathcal{H}_{n}^{d}$ be the space of spherical harmonics of degree $n$ of $d$
variables. It is well-known that $\dim \CH_n^d = \binom{n+d-1}{n} - \binom{n+d-3}{n-2}$. Let
$\left\{Y_{\ell}^{n-2 m}: 1 \leq \ell \leq \operatorname{dim} \mathcal{H}_{n-2 m}\right\}$ be an orthonormal basis
of $\mathcal{H}_{n-2 m}^{d}$ for $0 \leq m \leq n / 2$. Define
\begin{equation}\label{firstb}
 Q_{\ell, m}^{n}\left(W_{\mu} ; x\right)=P_{m}^{\left(\mu, n-2 m+\frac{d-2}{2}\right)}\left(2\| x \|^{2}-1\right) Y_{\ell}^{n-2 m}(x).
\end{equation}
Then the set $\left\{Q_{\ell, m}^{n}\left(W_{\mu}\right): 0 \leq m \leq n / 2,1 \leq \ell \leq
\operatorname{dim} \mathcal{H}_{n-2 m}^d\right\}$
is an orthogonal basis of $\mathcal{V}_{n}(W_{\mu},\BB^d)$ (cf. \cite[(5.2.4)]{DX}).

Let $\partial_i$ denote the partial derivative in the $i$th variable and $\Delta$ the Laplacian operator
$\Delta:= \partial^2_1 + \cdots + \partial^2_d$. The restriction of $\Delta$ on the unit sphere is the
Laplace-Beltrami operator, denoted by $\Delta_0$, which has spherical harmonics as eigenfunctions.
More precisely \cite[(1.8.3)]{DaiX}, for $n =0,1,2,\ldots,$
\begin{align}\label{eq:LBe}
  \Delta_0 Y = -n (n + d - 2) Y,  \qquad \forall\, Y \in \mathcal{H}_n^d.
\end{align}
An analog of this profound property holds for classical orthogonal polynomials on the unit ball. More precisely,
we have \cite[(5.2.3)]{DX}
\begin{equation} \label{eq:eigenB}
  \CD_\mu P = - n (n+ 2 \mu + d) P, \qquad \forall P \in \CV_n(W_\mu, \BB^d),
\end{equation}
where $\CD_\mu$ is the second-order differential operator defined by
\begin{equation*}
  \CD_\mu := \sum_{i=1}^d (1-x_i^2) \partial_i^2 - 2 \sum_{1 \le i < j \le d} x_i x_j \partial_i \partial_j - (d+2\mu+1)
  \sum_{i=1}^d x_i \partial_i.
\end{equation*}
{
The multivariate Chebyshev operator $\CD_\mu$ for $\mu =-1/2$ was introduced in \cite{Gan1} (see also \cite{Gan2}).} This differential operator can be written in several different forms. We state one more below that can be used
to show that $\CD_\mu$ is self-adjoint in $L^2(W_\mu, \BB^d)$; see Section 4. We need to introduce
the differential operator $D_{i,j}$ defined by
$$
  D_{i,j}: = x_i \partial_j - x_j \partial_i, \qquad 1 \le i < j \le d.
$$
The differential operator $\CD_\mu$ can be decomposed as a sum \cite[Proposition 7.1]{DaiX1}
\begin{equation} \label{eq:CDdecomp}
  \CD_{\mu} = \CD_{\mu}^{\mathrm{rot}} + \CD_\SS,
\end{equation}
where $\CD_\mu^{\mathrm{rot}}$ and $\CD_\SS$ are the radius and the spherical parts, respectively, defined by
\begin{align}\label{DmuDec1}
  \CD_{\mu}^{\mathrm{rot}} : =  \frac{1}{W_\mu(x)}\sum_{i=1}^{d} \partial_{i}\,\!\big(W_{\mu+1}(x)\,\partial_{i}\big)
 \quad \hbox{and}\quad   \CD_\SS:= \sum_{1\le i < j \le d} D_{i,j}^2.
\end{align}
The reason we call $\CD_\mu^{\mathrm{rot}}$ will become clear in the last section. We call $\CD_\SS$ the
sphiercal part because $D_{i,j}$ is the angular derivative  \cite[(1.8.1)]{DaiX} in the sence that
if $(x_i,x_j) = r_{i,j}(\cos \t_{i,j}, \sin \t_{i,j})$,
then $D_{i,j} = \frac{\partial}{\partial \t_{i,j}}$. Moreover, the restriction of $\CD_\SS$ on the unit sphere
$\sph$ agrees with the Laplace-Beltrami operator \cite[(1.8.3)]{DaiX}.

Let us also mention that an orthogonal polynomial $P \in \CV_n(W_\mu; \BB^d)$ satisfies a parity property:
If $n$ is even, $P$ is a sum of monomials of even degree, and if $n$ is odd, then $P$ is a sum of monomials of
odd degree \cite[Theorem 3.3.11]{DX}. Finally, the Fourier orthogonal expansion for $f \in L^2(W_\mu, \BB^d)$ is
defined by
\begin{equation} \label{eq:Fourier}
f= \sum_{n=0}^\infty  \proj_n(W_\mu;f),
\end{equation}
where $\proj_n(W_\mu): L^2(W_\mu, \BB^d) \mapsto  \CV_n(W_\mu; \BB^d)$ is the projection operator and,
in terms the orthogonal basis $\{Q_{\ell,m}^n\}$ of $\CV_n(W_\mu; \BB^d)$, this operator can be written as
\begin{equation}\label{eq:proj}
\proj_n (W_\mu; f) = \sum_{m =0}^{\lfloor \frac n 2 \rfloor} \sum_{\ell=1}^{\dim \CH_{n-2m}^d}
     \wh f_{\ell, m}^n Q_{\ell,m}^n, \quad \hbox{where} \quad
         \wh f_{\ell, m}^n = \frac{\la f,  Q_{\ell,m}^n \ra_\mu}{\la  Q_{\ell,m}^n,  Q_{\ell,m}^n \ra}.
\end{equation}
By its definition, the projection operator is independent of the choice of bases of $ \CV_n(W_\mu; \BB^d)$.
For further discussions, see \cite[Section 5.2]{DX}.

\subsection{Orthogonal polynomials on the simplex} On the simplex $\triangle^d$ defined by
$$
\triangle^d := \left\{x \in \RR^d: x_1 \ge 0, \ldots, x_d \ge 0, \, |x| \le 1 \right\},
$$
where $|x| = x_1+\cdots+x_d$, the Jacobi weight is defined, for $\bk =(\k_1, \ldots, \k_{d+1})$, by
$$
  \sW_\bk(x) = x_1^{\k_1} \cdots x_d^{\k_d} (1-|x|)^{\k_{d+1}}, \qquad \k_i > -1, \quad 1 \le i \le d+1.
$$
Let $\CV_n(\sW_\bk, \triangle^d)$ be the space of orthogonal polynomials of degree $n$ for $\sW_\bk$ on the simplex.
An orthogonal basis for this space can be given in terms of the Jacobi polynomials \cite[Section 5.3]{DX}. Although
the explicit formulas of the basis will not be utilized in this study, the close relation between orthogonal polynomials on
the two domains is needed. Indeed, the space $\CV_n(W_\mu, \BB^d)$ on the unit ball can be decomposed as a direct
sum of the space $\CV_n(\sW_\bk, \triangle^d)$ on $\triangle^d$ of degree $m$ in $y$ with $y_j = x_j^2$, where $\bk$
takes different values depending on $\mu$. To emphasize the dependence on the variables, we denote by
$$
  \CV_n\left(\sW_\bk, \triangle^d\right)\circ\psi = \mathrm{span} \left\{P \circ \psi : P \in \CV_n(\sW_\bk, \triangle^d) \right\},
$$
where $\psi$ is defined by $\psi: x \in \RR_+^d \mapsto (x_1^2,\ldots, x_d^2)$. Then, the aforementioned statement is given
precisely as
\begin{align} \label{eq:VB=VT}
 \begin{split}
  \CV_{2n}\left(W_\mu; \BB^d\right) & =
     \bigoplus_{\ve \in \{0,1\}^d,\, |\ve| = \mathrm{even}} x^\ve \CV_{n-\f12 |\ve|} \left(\sW_{(-\f12 + \ve, \mu)},\triangle^d\right)\circ \psi, \\
  \CV_{2n+1}\left(W_\mu; \BB^d\right) & =
     \bigoplus_{\ve \in \{0,1\}^d,\, |\ve| = \mathrm{odd}} x^\ve \CV_{n-\f12 (|\ve|-1)} \left(\sW_{(-\f12 + \ve, \mu)}, \triangle^d\right)\circ \psi.
\end{split}
\end{align}
These relations can be deduced fairly straightforwardly from the following integral identity (cf. \cite[Lemma 4.4.1]{DX}),
\begin{align}\label{BtoT}
\int_{\mathbb{B}^{d}} f(y_1^2, \ldots, y_d^2)\, \mathrm{d}y
   = \int_{\triangle^d} f(x_1, \ldots, x_d)
     \frac{\mathrm{d}x}{\sqrt{x_1 \cdots x_d}}.
\end{align}

The orthogonal polynomials in $\CV_n\left(\sW_\bk, \triangle^d\right)$ are also the eigenfunctions of a second-order linear
differential operator, which plays an important role in establishing sharp Bernstein inequalities in $L^2(\sW_\bk, \triangle^d)$
in \cite{GX}. We shall not state the operator here.

The connection between the orthogonal structure on the ball and on the simplex also extends to some other aspects of the
analysis on the two domains, and will be used later in Section 4. We mention one such property below, as a preparation for
the discussion on the Bernstein inequalities in the following subsection.

Let $\Omega$ be a domain equipped with a distance function $\sd(\cdot,\cdot)$. For $r > 0$ and $x\in \Omega$, denote
by $\sB(x,r) = \{y \in\Omega: \sd(x,y) < r\}$ the ball centered in $x$ with radius $r$. A weight function $\sw$ defined
on $\Omega$ is called a doubling weight if there is a constant $L> 0$ such that
$$
    \sw (\sB(x, 2 r)) \le L\, \sw (\sB(x, 2 r)), \qquad \forall x\in \Omega, \quad 0 < r < r_0,
$$
where $r_0$ is the largest positive number such that $\sB(x, r) \subset \Omega$, and, for a set $E \subset \Omega$, we define
$\sw(E)$ by $\sw(E) = \int_E \sw(x) \d x$.

The distance $\sd_\triangle$ on the simplex $\triangle^d$ and the distance function $\sd_{\mathbb{B}}$ on the ball $\mathbb{B}^d$
are defined by, respectively,
\begin{align*}
  &\sd_\triangle(x,y) = \arccos \left( \sqrt{x_1} \sqrt{y_1} + \cdots  \sqrt{x_d} \sqrt{y_d}+  \sqrt{1-|x|} \sqrt{1-|y|}\right), \\
  &\sd_{\mathbb{B}} (x,y) = \arccos \left(\langle x, y\rangle+\sqrt{1-\|x\|^{2}} \sqrt{1-\|y\|^{2}}\right).
\end{align*}
They are clearly closely related, as $\sd_\triangle(x^2, y^2) = \sd_\BB(x,y)$ for $x,y$ in the same quadrant of $\BB^d$, if
we define $x^2 =(x_1^2,\ldots,x_d^2)$. For later reference, we state the following lemma, which follows immediately from
the relation between the distance functions of the two domains and the identity \eqref{BtoT}.

\begin{lem}\label{lemDW}
A weight function $W(x)=\sw\big(x_1^2,\ldots,x_d^2\big)$ is a doubling weight on $\BB^d$ if and only if the weight
function
$$
 \sW_\triangle(x) = \frac{\sw(x_1,\ldots,x_d)}{\sqrt{x_1 \cdots x_d}}
$$
is a doubling weight on the simplex $\triangle^d$.
\end{lem}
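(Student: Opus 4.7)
The plan is to transfer the doubling condition between the two domains by combining the distance identity $\sd_\triangle(\psi(x), \psi(y)) = \sd_\BB(x, y)$ (valid for $x, y$ in the same closed quadrant of $\BB^d$, where we extend $\psi$ to all of $\BB^d$ by the same coordinate-wise squaring formula) with the integral identity \eqref{BtoT}. The sign-symmetry of $W(x) = \sw(x_1^2, \ldots, x_d^2)$ in each coordinate is what makes the cross-quadrant bookkeeping collapse.

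First I would set up the geometric picture. Fix $z \in \triangle^d$ and write $\sqrt{z} := (\sqrt{z_1}, \ldots, \sqrt{z_d})$. For each sign pattern $\ve \in \{-1, +1\}^d$ set $\ve\sqrt{z} := (\ve_1\sqrt{z_1}, \ldots, \ve_d\sqrt{z_d})$ and let $Q_\ve := \{y \in \BB^d : \ve_i y_i \ge 0,\ 1 \le i \le d\}$. Directly from the explicit formulas for the two distance functions, one checks that $\sd_\BB(y, \ve\sqrt{z}) = \sd_\triangle(\psi(y), z)$ for $y \in Q_\ve$, and therefore, up to a measure-zero set,
\begin{equation*}
  \psi^{-1}(\sB_\triangle(z, r)) = \bigcup_{\ve \in \{-1,+1\}^d} \bigl[\sB_\BB(\ve\sqrt{z}, r) \cap Q_\ve\bigr].
\end{equation*}

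Next I would apply \eqref{BtoT}: the substitution $u_i = y_i^2$ converts $\sW_\triangle(u)\,\d u$ into $W(y)\,\d y$, so the $\sW_\triangle$-measure of $\sB_\triangle(z, r)$ equals the $W$-integral over the preimage above. Because $W$ is even in each coordinate, the $2^d$ sets $\sB_\BB(\ve\sqrt{z}, r) \cap Q_\ve$ all contribute the same integral, and all $2^d$ balls $\sB_\BB(\ve\sqrt{z}, r)$ themselves have a common $W$-measure. Sandwiching the union between one ball and the sum of the $2^d$ balls then yields the two-sided comparison
\begin{equation*}
  W(\sB_\BB(\sqrt{z}, r)) \,\le\, \sW_\triangle(\sB_\triangle(z, r)) \,\le\, 2^d W(\sB_\BB(\sqrt{z}, r)).
\end{equation*}

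With this equivalence in hand, the doubling transfer is a one-line chain in each direction. If $W$ is doubling on $\BB^d$ with constant $L$, then $\sW_\triangle(\sB_\triangle(z, 2r)) \le 2^d W(\sB_\BB(\sqrt{z}, 2r)) \le 2^d L\, W(\sB_\BB(\sqrt{z}, r)) \le 2^d L\, \sW_\triangle(\sB_\triangle(z, r))$. For the reverse, given $x \in \BB^d$ put $z = \psi(x)$ and use that $W(\sB_\BB(x, r)) = W(\sB_\BB(\sqrt{z}, r))$ by sign-invariance of $W$, so the same chain produces $W(\sB_\BB(x, 2r)) \le 2^d L\, W(\sB_\BB(x, r))$. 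The main obstacle is the geometric identification in the first step—namely, showing that the preimage of $\sB_\triangle(z, r)$ under $y \mapsto (y_1^2, \ldots, y_d^2)$ breaks into exactly one ball per quadrant, each centered at the corresponding reflection of $\sqrt{z}$. Once that picture is secured, everything else reduces to invoking \eqref{BtoT} and the evenness of $W$ in each coordinate.
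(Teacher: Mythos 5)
Your proof is correct and uses exactly the two ingredients the paper points to — the quadrant-wise distance identity $\sd_\triangle(\psi(y),\psi(y'))=\sd_\BB(y,y')$ and the change-of-variable formula \eqref{BtoT} — to compare ball and simplex measures; the paper gives no details (it simply states the lemma ``follows immediately''), so your argument is a legitimate fleshing-out of that remark. The only step that deserves an explicit word is the containment $\sB_\BB(\sqrt{z}, r) \subseteq \bigcup_{\ve} \bigl[\sB_\BB(\ve\sqrt{z}, r) \cap Q_\ve\bigr]$ underlying your lower bound, which holds because for $y \in Q_\ve$ one has $\cos\sd_\BB(y,\sqrt{z}) = \sum_i y_i\sqrt{z_i} + \sqrt{1-\|y\|^2}\sqrt{1-|z|} \le \sum_i |y_i|\sqrt{z_i} + \sqrt{1-\|y\|^2}\sqrt{1-|z|} = \cos\sd_\BB(y,\ve\sqrt{z})$, so $\sd_\BB(y,\ve\sqrt{z}) \le \sd_\BB(y,\sqrt{z})$.
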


\subsection{Bernstein Inequalities on the Simplex}
Let $\Pi_n^d$ be the space of algebraic polynomials of degree at most $n$ in $d$ variables. For $1 \le p \le \infty$,
we denote by $\|f\|_{\bk,p}$ the norm of $f\in L^p(W_\bk,\triangle^d)$, where we assume that the norm is the uniform
norm on $\triangle^d$ when $p = \infty$. The Bernstein inequalities on the simplex bounded the norm of the
partial derivatives $\partial_i$ and the derivatives along other edges of the simplex,
$$
    \partial_{i,j}:= \partial_i - \partial_j, \qquad 1 \le i \ne j \le d.
$$
The classical Bernstein inequalities on the simplex are defined by, for $f \in \Pi_n^d$,
\begin{align*}
   \left \| \varphi_i^r \partial_i^r f \right \|_{\bk,p} & \le c_{p,r} n^r {\|f\|}_{\bk,p}, \qquad 1 \le i \le d,
\end{align*}
and
\begin{align*}   \left\| \varphi_{i,j}^r \partial_{i,j}^r f \right\|_{\bk,p} & \le c_{p,r} n^r {\|f\|}_{\bk,p}, \quad 1 \le i < j \le d,
\end{align*}
where the functions $\varphi_i$ and $\varphi_{i,j}$ are given by
$$
\varphi_i(x) = \sqrt{x_i}\sqrt{1-|x|}, \quad 1 \le i \le d, \quad \hbox{and}\quad \varphi_{i,j} = \sqrt{x_i}\sqrt{x_j}, \quad 1\le i<j \le d,
$$
and $c_{p,r}$ is a positive constant independent of of $n$ (cf. \cite{BX, Dit, DT}).

More recently, several new and stronger Bernstein inequalities were established in \cite{GX}, which hold not only for
$\|\cdot\|_{\bk, p}$ but also for the $L^p$ norm defined for any doubling weight on the simplex \cite[Theorem 3.1]{GX}.

\begin{thm}\label{thm:B-Lp}
Let $\sW$ be a doubling weight on $\triangle^d$ and define, for $x \in \triangle^d$,
$$
 \quad \phi_{i} (x) := \frac{\sqrt{x_i}\sqrt{1-|x|}}{\sqrt{x_i+ 1- |x|}} \quad \hbox{and} \quad
   \phi_{i,j} (x) := \frac{\sqrt{x_i} \sqrt{x_j}}{\sqrt{x_i+x_j}}.
$$
For $r \in \NN$ and $1 \le p \le \infty$, there exists a constant $c=c(\sW,r,d,p) > 0$  such that for every $f \in \Pi_n^d$, the following inequalities hold.
\begin{equation}\label{eq:B_p1}
   \left\| \phi_i^r \partial_i^r f \right\|_{L^p(\sW, \triangle^d)} \le c n^r {\|f\|}_{L^p(\sW, \triangle^d)},\quad 1 \le i \le d,
\end{equation}
and
\begin{equation}\label{eq:B_p2}
   \left\| \phi_{i,j}^r \partial_{i,j}^r f \right\|_{L^p(\sW, \triangle^d)} \le c n^r {\|f\|}_{L^p(\sW, \triangle^d)}, \quad 1 \le i < j \le d.
\end{equation}
\end{thm}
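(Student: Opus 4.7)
The plan is to combine the framework of highly localized polynomial kernels for doubling weights on $\triangle^d$ with a sharp pointwise Bernstein inequality, and then upgrade to $L^p$ through a maximal-function argument. The $L^p$ step is fairly standard in the doubling-weight setting; the novelty lives in the pointwise step, where the factor must be $\phi_i$ rather than the classical $\varphi_i$.

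First I would prove the pointwise inequality
$$
 \phi_i(x)\, |\partial_i f(x)| \le c\, n\, \|f\|_{L^\infty(E_n(x))}, \qquad f \in \Pi_n^d,
$$
where $E_n(x)$ is the pseudo-cube of radius $\asymp 1/n$ around $x$ in the metric $\sd_\triangle$. A naive chord-Bernstein argument along the straight line through $x$ in direction $e_i$ produces only $\varphi_i(x) = \sqrt{x_i(1-|x|)}$; to reach $\phi_i$, I would appeal to the Bernstein--Walsh inequality $|f(z)| \le \|f\|_{\triangle^d}\, e^{n V_{\triangle^d}(z)}$ and differentiate the Siciak--Zakharyuta extremal function $V_{\triangle^d}$ of the simplex, whose Dini derivative in direction $e_i$ at a boundary point equals $\phi_i^{-1}$ up to constants (this is the plurisubharmonic connection flagged in the introduction, following \cite{Baran}). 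For higher $r$, one iterates the one-step estimate on a sequence of nested pseudo-cubes.

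Second, I would upgrade to $L^p$. Since $\sW$ is a doubling weight with respect to $\sd_\triangle$ and the pseudo-cubes $E_n(x)$ have comparable $\sW$-volumes on overlapping cubes, the local $L^\infty$ norm of a polynomial is controlled by a Hardy--Littlewood-type maximal average $\CM_\sW f$ on a slightly enlarged cube; this is standard through Christoffel-function estimates for doubling weights on the simplex. Combining with the pointwise Bernstein and invoking the $L^p$-boundedness of $\CM_\sW$ for $1 < p \le \infty$, together with a duality step at $p = 1$, then yields \eqref{eq:B_p1}. Inequality \eqref{eq:B_p2} is proved by the same scheme applied to the chord through $x$ in direction $e_i - e_j$, which hits the faces $x_i = 0$ and $x_j = 0$ at distances producing $\phi_{i,j}$ in place of $\phi_i$.

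The main obstacle is the pointwise estimate with the sharp factor $\phi_i$. The pluripotential route relies on precise directional Dini-derivative information about $V_{\triangle^d}$, which is delicate; the alternative of working directly with derivatives of a localized reproducing kernel (in the spirit of \cite{PX, X23}) demands a careful separation of the angular decay on scale $1/n$ in $\sd_\triangle$ from an additional radial decay that ultimately accounts for the improvement from $\varphi_i$ to $\phi_i$, and this is what makes the kernel estimate technically involved. Once the pointwise step is settled, the $L^p$ upgrade and the iteration to arbitrary $r$ proceed along routine lines.
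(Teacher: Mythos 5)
This theorem is \emph{not proved in the paper at all}: it is Theorem~3.1 of the reference \cite{GX}, quoted verbatim in Section~2.3 as a known input. The paper's own contribution (Section~3) is the analogous Bernstein inequality on the ball, Theorem~\ref{BIforLp}, and the whole point of the paper's proof strategy there is precisely to \emph{avoid} re-running the kind of argument you sketch. The authors explicitly note in the introduction that proving the ball inequalities directly ``by using the highly localized kernels developed in \cite{PX, X23} \ldots requires technique and tedious estimates,'' and they deliberately choose a different route: decompose $f\in\Pi_n^d$ into $2^d$ parity components $f_\ve = x_\ve\, g_\ve(x_1^2,\ldots,x_d^2)$, change variables $u_\ell = x_\ell^2$ to pass to the simplex, match up the doubling weights via Lemma~\ref{lemDW}, and then invoke Theorem~\ref{thm:B-Lp} as a black box. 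So you have supplied a sketch for a result the paper cites, while the paper's genuinely new proof is for a different (downstream) theorem.

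On the merits of the sketch itself: the localized-kernel route you describe as a fallback is indeed the one used in \cite{GX}, and the obstacle you flag --- isolating, in the derivative of the localized reproducing kernel, the additional decay responsible for the upgrade from $\varphi_i$ to $\phi_i$ --- is exactly the technical core of that proof. Your primary pluripotential route, however, has a concrete gap. The inequality \eqref{Baran_extremal} bounds $|\partial_i f(x)|$ by $n\,D_i^+ V_K(x)$ times the \emph{global} sup norm $\|f\|_K$, whereas your pointwise step needs the local sup norm on the pseudo-cube $E_n(x)$. Passing from the global to the local statement is not automatic; you would need the extremal function of $E_n(x)$ itself (or a uniform comparison between $V_{E_n(x)}$ and $V_{\triangle^d}$ near $x$), which is a nontrivial localization claim you have not justified. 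Also, for $r>1$, iterating over nested pseudo-cubes is more delicate than the weight-absorption device used in \cite{GX} and in Section~3 here: one sets $\sW^* = \phi_i^{(r-1)p}\sW$, checks that $\sW^*$ is still doubling (this uses $\phi_i\le 1$ together with the Jacobi-type structure of $\phi_i$), and applies the $r=1$ case with the new weight; this sidesteps any need to track how $\phi_i$ varies across scales.
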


The two inequalities in the theorem are stronger than the classical Bernstein inequalities \eqref{eq:B_p1} and \eqref{eq:B_p1}
when $\sW = \sW_\bk$ is the Jacobi weight, since $\varphi_i(x) \le \phi_i(x) \le 1$ and $\varphi_{i,j}(x)\le \phi_{i,j}(x) \le 1$ for
$x \in \triangle^d$. 
While the factor $\sqrt{x_i}$ and $\sqrt{1-|x|}$ in $\varphi_i$ can be interpreted as the distance from $x$ to the boundary
of $\triangle^d$, it is not clear if the function $\phi_i$ has a natural geometric interpretation. It turns out, however, that
an interpretation of $\phi_i$ lies in the theory of an extremal function associated with the Bernstein inequality that
utilizes plurisubharmonic functions. This connection sheds light on the new Bernstein inequalities in Theorem \ref{thm:B-Lp}
from a different angle, which we now describe.

Let $E$ be a compact subset of $\mathbb{C}^d$. Denote the uniform norm on $E$ by $\|\cdot\|_E$.
The Siciak's extremal function on $E$, denoted by $\Phi_E(z)$, is defined for $z \in \mathbb{C}^d$ by
\begin{align}\label{extremPhi}
\Phi_{E}(z):=\sup \left\{|P(z)|^{\frac{1}{\operatorname{deg} P}}: \operatorname{deg} P \geq 1
\text { and } {\|P\|}_{E} \leq 1, \quad  P \in \CP\right\},
\end{align}
where $\CP$ denotes the space of holomorphic polynomials.
We refer to \cite{S1962} for properties of this function and its applications in the theory of analytic
functions in several complex variables. To state the result most relevant to us, we need the definition of {\it plurisubharmonic} (psh)
functions. A function $u$ with values in $[-\infty, +\infty)$ defined in an open set $X \subset \mathbb{C}^n$ is a psh function
if
\begin{enumerate}
    \item $u$ is upper semi-continuous;
    \item For arbitrary $z$ and $w$ in $\mathbb{C}^n$ the function
   \[
        \tau \mapsto u(z + \tau w)
  \]
is subharmonic in the open subset of $\mathbb{C}$ where it is defined.
\end{enumerate}
If both $u$ and $-u$ are plurisubharmonic, then $u$ is called \textit{pluriharmonic}.
The Lelong class of psh functions is defined by
$$
\mathcal{L}_{d} := \left \{ u \ \text{psh on } \mathbb{C}^d: \ u(z) \leq \log(1 + \sqrt{|z_1|^2 + \ldots + |z_d|^2}) + O(1) \right \}.
$$
One of the basic results for Siciak's extremal function is its connection with the function
\begin{equation} \label{extremE}
V_E(z):= \sup \{ u(z) : u \in \mathcal{L}_d, \, u|_E \le 0 \}.
\end{equation}
For $d =1$, the function $V_E$ is the classical Green function of the planar compact set $E$ that has a logarithmic
pole at infinity. The following theorem is due to Zakharyuta  \cite{Z} and Siciak \cite{S1981}.

\begin{thm}
If $E$ is a compact subset of $\mathbb{C}^d$ then
\[
\log \Phi_E(z) = V_E(z) \quad \text{for } z \in \mathbb{C}^d.
\]
\end{thm}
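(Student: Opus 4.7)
The plan is to prove the two inequalities $\log \Phi_E \le V_E$ and $V_E \le \log \Phi_E$ separately, since they rely on quite different ideas.

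The direction $\log \Phi_E(z) \le V_E(z)$ is essentially by inspection. For any competitor $P \in \CP$ of degree $m \ge 1$ with $\|P\|_E \le 1$, set $u_P(z) := \frac{1}{m}\log|P(z)|$. This function is plurisubharmonic on $\mathbb{C}^d$, satisfies $u_P \le 0$ on $E$, and obeys the Lelong growth condition via the trivial bound $|P(z)| \le \|P\|_\infty(1+\|z\|)^m$. Hence $u_P \in \mathcal{L}$ with $u_P|_E \le 0$, so $u_P(z) \le V_E(z)$ by \eqref{extremE}. Taking the supremum over all admissible $P$ then yields $\log \Phi_E(z) \le V_E(z)$.

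For the reverse direction $V_E(z) \le \log \Phi_E(z)$, the idea is that every $u \in \mathcal{L}$ with $u|_E \le 0$ can be recovered as a limit of normalized logarithms of polynomials. Fix $z_0 \in \mathbb{C}^d$ and such a $u$; the goal is to construct, for each large integer $m$, a polynomial $P_m$ of degree at most $m$ with $\|P_m\|_E \le e^{o(m)}$ and $|P_m(z_0)|^{1/m} \to e^{u(z_0)}$. The cleanest modern route uses H\"ormander's $L^2$ estimate for $\bar\partial$ with plurisubharmonic weight $2 m u(z) + (d+1)\log(1+\|z\|^2)$: starting from a smooth cutoff near $z_0$, one solves $\bar\partial v = \bar\partial \chi$ in the weighted space, and integrability against $(1+\|z\|^2)^{-(d+1)}$ forces the corrected holomorphic section to be a polynomial of degree at most $m$, while the weight $e^{-2 m u}$ combined with $u|_E \le 0$ keeps $\|P_m\|_E$ under sub-exponential control. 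Feeding $P_m/\|P_m\|_E$ back into \eqref{extremPhi} and letting $m \to \infty$ yields $u(z_0) \le \log \Phi_E(z_0)$, and taking the supremum over $u$ finishes the proof.

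The hard part is this reverse direction, and specifically tuning the H\"ormander construction so that the $\bar\partial$-solution genuinely produces a polynomial of degree $\le m$ whose value at $z_0$ is as large as $e^{m u(z_0)(1-o(1))}$. An orthogonal issue is that the raw supremum in \eqref{extremE} need not be upper semicontinuous, so genuine pointwise equality $\log \Phi_E = V_E$ (as opposed to equality of upper semicontinuous regularizations) must rely on the fact that $\{V_E < V_E^*\}$ is pluripolar, a Bedford--Taylor type fact. A classical alternative to H\"ormander is Siciak's original Chebyshev-type construction, which bypasses $L^2$-$\bar\partial$ methods but requires a delicate iterative extremal polynomial argument.
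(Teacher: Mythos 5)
The paper does not prove this theorem; it is stated as a classical result and attributed to Zakharyuta \cite{Z} and Siciak \cite{S1981}, so there is no ``paper's proof'' to compare against. Your sketch of the easy inclusion $\log\Phi_E\le V_E$ is correct and essentially forced: $u_P=\tfrac1{\deg P}\log|P|$ is psh, lies in the Lelong class, and is $\le 0$ on $E$, so it is a competitor for $V_E$.

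The reverse inclusion is where the real content lives, and your sketch of the H\"ormander route has a genuine gap. To make $\chi-v$ a holomorphic polynomial that actually \emph{picks up} the value at $z_0$, the psh weight must carry a logarithmic pole at $z_0$ of the form $2d\log\|z-z_0\|$; without it, nothing in the $L^2$ estimate forces the correction $v$ to vanish at $z_0$, and you cannot conclude $|P_m(z_0)|\approx e^{mu(z_0)}$. Moreover, with the weight $2mu+(d+1)\log(1+\|z\|^2)$ the integrability bound only forces $\deg P_m\le m+O(1)$, not $\le m$; this is fixable by rescaling but needs to be said. More broadly, neither Zakharyuta nor Siciak used $L^2$-$\bar\partial$ methods: Siciak's proof proceeds via multivariate Chebyshev/Leja constructions and Zakharyuta's via a different extremal argument, and the modern $\bar\partial$ proof (Demailly-style) postdates both. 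Your worry about the upper semicontinuous regularization is also somewhat misplaced here: the Zakharyuta--Siciak theorem gives the raw pointwise identity $V_E=\log\Phi_E$ directly, and its proof does not hinge on $\{V_E<V_E^*\}$ being pluripolar (that is a separate Bedford--Taylor fact about $V_E$ versus $V_E^*$). In summary, your outline identifies valid strategies and is honest about the difficulty, but it is a high-level sketch of a deep theorem that the paper deliberately quotes from the literature rather than proving.
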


Let $f$ be real-valued in a neighborhood of $x_0 \in \mathbb{R}$. The lower Dini derivative $D_{+}f$, also called a
lower right-hand derivative, of $f$ at $x_0$ is defined by
\begin{align*}
 D_{+}f(x_0):=\liminf\limits_{h \rightarrow 0^{+}} \frac{f(x_0+h) - f(x_0)}{h}.
\end{align*}
Let $\{e_1,\ldots,e_d\}$ be the standard orthogonal basis in $\mathbb{R}^d$. Let $E$ be a compact set in $\mathbb{C}^d$.
For $z \in \operatorname{int} E$ and each $j = 1, \ldots, d$, define $F_j : \mathbb{R} \to \mathbb{R}$ by
\[
F_j(t):=  V_E\bigl(z + \i\, t e_j\bigr), \quad t \in \mathbb{R}.
\]
Since $\Phi_E(z)=1$ for $z \in E$, it follows $V_E(z)=0$ by $\log \Phi_E(z) = V_E(z)$. Therefore, for $z \in \operatorname{int} E$,
\[
D_{+}F_j(0)= \liminf\limits_{\epsilon \rightarrow 0^{+}} \frac{V_E(z + \i\, \epsilon e_j) - V_E(z)}{\epsilon}
   = \liminf\limits_{\epsilon \rightarrow 0^{+}} \frac{V_E(z + \i\,\epsilon e_j)}{\epsilon}=:D^{+}_j V_E(z).
\]
By its definition, $D^{+}_j V_E$ can be called a Dini derivative of the extremal function $V_E$. These derivatives are
closely related to the Bernstein-type inequalities, as seen in the following theorem \cite{B1, B2}, in which a compact
set $K \subset \RR^d$ is treated as a subset of $\mathbb{C}^{d}$ such that $\mathbb{R}^{d}=\{(z_{1}, \ldots, z_{d}) \in \mathbb{C}^{d}: \operatorname{Im} z_{j}=0, j=1, \ldots, d\}$.

\begin{thm}
Let $K$ be a compact set in $\mathbb{R}^d$ with nonempty interior. For every $x \in \operatorname{int}K$ and $f \in \Pi_n^d$,
\begin{align}\label{Baran_extremal}
  |\partial_j f(x)| \leq n D^{+}_j V_K(x) \left({\|f\|}^2_K - f^2(x)\right)^{1/2}, \quad j=1,\ldots,d.
\end{align}
\end{thm}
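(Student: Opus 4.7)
The plan is to reduce Baran's multivariable inequality to a sharp one-variable Bernstein estimate for a polynomial subject to a subharmonic growth bound, the reduction being mediated by the Bernstein--Walsh inequality in $\CC^d$ that follows directly from the Zakharyuta--Siciak identification $\log \Phi_K = V_K$ stated just above.

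First I would handle the degenerate case: set $M := \|f\|_K$ and $a := f(x)$, and note that if $|a| = M$ then $f$ attains its maximum modulus on $K$ at the interior point $x$, forcing $\partial_j f(x) = 0$ and making \eqref{Baran_extremal} trivial. Hence assume $|a| < M$, and introduce the one-variable polynomial $p(\tau) := f(x + \tau e_j) \in \RR[\tau]$, which has degree at most $n$ and satisfies $p(0) = a$, $p'(0) = \partial_j f(x)$. Applying the definition \eqref{extremPhi} to $f/M$ and using $\log \Phi_K = V_K$ yields the pointwise Bernstein--Walsh bound $|f(z)| \le M \exp(n V_K(z))$ on $\CC^d$; restricting to the complex line $z = x + \tau e_j$ gives
\begin{equation*}
|p(\tau)| \le M \exp(n \omega(\tau)), \qquad \omega(\tau) := V_K(x + \tau e_j),\ \tau \in \CC.
\end{equation*}
The function $\omega$ is subharmonic on $\CC$ (as the restriction of a psh function to a complex line), is non-negative, and vanishes on a real neighborhood of $0$ because $V_K|_K = 0$ and $x \in \operatorname{int} K$. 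By construction, $D^+_j V_K(x) = \liminf_{s \to 0^+} \omega(\i s)/s$.

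The theorem then reduces to the following one-variable lemma: if a real polynomial $p$ of degree at most $n$ satisfies $|p(\tau)| \le M e^{n\omega(\tau)}$ on $\CC$ with $\omega$ subharmonic, non-negative, and vanishing on a real neighborhood of $0$, then $|p'(0)| \le n D^+\omega(0) \sqrt{M^2 - p(0)^2}$. After normalizing $M = 1$ one introduces the Chebyshev-like substitution $p(\tau) = \cos\Theta(\tau)$, where $\Theta$ is the unique holomorphic branch of $\arccos(p)$ near $\tau = 0$ with $\Theta(0) = \arccos(a) \in (0, \pi)$. Differentiating gives $p'(0) = -\sqrt{1 - a^2}\,\Theta'(0)$, reducing matters to $|\Theta'(0)| \le n D^+\omega(0)$. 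Since $\Theta$ maps a real neighborhood of $0$ into $(0, \pi)$, the imaginary part $\operatorname{Im}\Theta$ vanishes on this real interval, and the subharmonic growth of $p$ is transferred into a control on $|\operatorname{Im}\Theta(\i s)|$ via the identity $|\cos(u + \i v)|^2 = \cos^2 u + \sinh^2 v$ together with a Phragm\'en--Lindel\"of argument, producing $|\operatorname{Im}\Theta(\i s)| \le n s D^+\omega(0) + o(s)$ as $s \to 0^+$. The Cauchy--Riemann equations then deliver the desired bound on $|\Theta'(0)|$.

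The main obstacle is exactly this one-variable lemma: the Bernstein--Walsh bound alone, substituted into Cauchy's integral formula on a small circle around $0$, only yields a crude estimate of order $M$ rather than $\sqrt{M^2 - p(0)^2}$. Extracting the sharp factor genuinely requires exploiting both the real-axis modulus bound $|p(t)| \le M$ and the subharmonic complex growth simultaneously, and the Chebyshev substitution is the standard mechanism for encoding their interplay; carrying out the Phragm\'en--Lindel\"of step tightly enough to recover exactly the Dini-derivative constant $D^+\omega(0)$ is the most delicate point.
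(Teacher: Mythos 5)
The opening moves—the Bernstein--Walsh bound $|f(z)| \le \|f\|_K \exp(n V_K(z))$ deduced from $\log\Phi_K = V_K$, the disposal of the degenerate case $|f(x)| = \|f\|_K$, and the restriction to the complex line $z = x + \tau e_j$—are correct and are indeed how Baran's argument begins. The gap is in the Chebyshev step. After writing $p = \cos\Theta$, you assert that $|\cos(u+\i v)|^2 = \cos^2 u + \sinh^2 v$ together with Phragm\'en--Lindel\"of produces $|\IM\Theta(\i s)| \le n s\, D^+\omega(0) + o(s)$. It does not: the identity gives $\sinh^2(\IM\Theta) = |p|^2 - \cos^2(\RE\,\Theta)$, and along $\tau = \i s$ both terms on the right tend to $a^2 = p(0)^2$, so that $\sinh^2(\IM\Theta(\i s))$ is a near-cancellation of two quantities of size $a^2 + O(s^2)$. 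The modulus bound $|p(\i s)|^2 \le e^{2n\omega(\i s)} = 1 + O(s)$ lives at an entirely different scale: once $|a| < 1$ it holds automatically for small $s$ whatever $p'(0)$ is, so it carries no information about the derivative at all. The binding constraint is global---the degree of $p$ together with the shape of $\omega$ away from $\tau = 0$---and nothing in the sketch explains how a Phragm\'en--Lindel\"of comparison on a single complex line could extract the Dini-derivative constant from that global data.

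The paper itself does not prove this statement (it cites Baran), but the mechanism in Baran's proof is worth knowing because it explains why your reduction stalls: the argument never descends to a self-contained one-variable lemma. With $M := \|f\|_K$, set
\[
\tilde v(z) := \frac{1}{n}\, V_{[-M,M]}\!\bigl(f(z)\bigr), \qquad
V_{[-M,M]}(w) = \log\left|\frac{w + \sqrt{w^2 - M^2}}{M}\right|, \qquad z \in \CC^d.
\]
Since $V_{[-M,M]}$ is subharmonic on $\CC$ and $f$ is holomorphic, $\tilde v$ is plurisubharmonic on $\CC^d$; since $f(K)\subset[-M,M]$, $\tilde v$ vanishes on $K$; and since $\deg f \le n$, $\tilde v$ has logarithmic growth, hence $\tilde v \in \mathcal{L}$ and therefore $\tilde v \le V_K$ on $\CC^d$. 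A Taylor expansion of $V_{[-M,M]}$ about a point of $(-M,M)$ gives
\[
\lim_{s\to 0^+}\frac{\tilde v(x + \i s e_j)}{s} = \frac{|\partial_j f(x)|}{n\sqrt{M^2 - f(x)^2}},
\]
and comparing this with $D^+_j V_K(x) = \liminf_{s\to 0^+} V_K(x+\i s e_j)/s$ yields \eqref{Baran_extremal}. The crucial extremality is a $\CC^d$ phenomenon: $\tilde v$ is a competitor in the supremum \eqref{extremE} defining $V_K$, which is what forces $\tilde v \le V_K$. That structure is lost as soon as you restrict to a single complex line and replace $V_K$ by an abstract subharmonic $\omega$, which is exactly where the one-variable Chebyshev--Phragm\'en--Lindel\"of route breaks down.
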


To see the connection to the Bernstein inequality on the simplex, we recall the explicit formula of $\Phi_K$ for
$K = \triangle^d$ given in \cite{Baran},
\begin{align*}
 \Phi_{\triangle^d}(z)=\left[ h(|z_1| + \ldots + |z_d| + |z_1 + \ldots + z_d -1|) \right]^{1/2},
\end{align*}
where $h(\zeta)=\zeta+\sqrt{\zeta^2-1}$ if we choose a branch of the square root function so that $|h(\zeta)|>1$ for
$\zeta \in \mathbb{C}\setminus [-1,1]$, from which one can compute the Dini derivative explictly,
\begin{align*}
      D^{+}_i V_{\triangle^d}(x)= \frac{\sqrt{x_i+ 1- |x|} }{\sqrt{x_i}\sqrt{1-|x|}} =  \frac{1} {\phi_{i} (x)}.
\end{align*}
This shows, in particular, that $\phi_i(x)$ in \eqref{eq:B_p1} appears as the reciprocal of the Dini derivative that
appears in the Bernstein inequality \eqref{Baran_extremal}. Furthermore, moving $ D^{+}_j V_K(x)$ to the left-hand
side of \eqref{Baran_extremal} shows that \eqref{eq:B_p1} can be regarded as an $L^p$ version of the inequality
\eqref{Baran_extremal} for $K= \triangle^d$. As far as we know, the $L^p$ version of the latter has only been
discussed for certain cuspidal domains in \cite{BT}.

\section{$L^p$ Bernstein Inequalities for Doubling Weight}\label{last}
\setcounter{equation}{0}

In this section, we prove our new Bernstein inequalities on the unit ball in the $L^p$ norm with respect to a fully symmetric
doubling weight. The main results are stated and discussed in the first subsection, while their proof is given in the second
subsection, 
{and two examples that show the sharpness of the inequalities are proved in the third subsection.}

\subsection{Main result}
For comparison, let us mention the following two Bernstein inequalities on $\BB^d$ that are known in
the literature (cf. \cite[(12.3.17)]{DaiX} and \cite{Dai}). For $1 \le p\le \infty$, $f \in \Pi_n^d$ and $r \in \NN$,
\begin{equation}\label{old_pi}
\left\|\varphi^r  \partial_{i}^r f\right\|_{L^p(W_\mu, \mathbb{B}^{d})} \leq c\, n^r {\|f\|}_{L^p(W_\mu, \mathbb{B}^{d})}, \quad 1 \leq i \leq d,
\end{equation}
where we recall $\varphi(x) = \sqrt{1-\|x\|^2}$, and
\begin{equation}\label{old_pij}
\left\|  D_{i, j}^r f \right\|_{L^p(W_\mu, \mathbb{B}^{d})} \leq c \, n^r {\|f\|}_{L^p(W_\mu, \mathbb{B}^{d})}, \quad 1 \leq i<j \leq d,
\end{equation}
where we recall that $D_{i,j} = x_i \partial_j - x_j \partial_i$ is the angular derivative.

To state our main result, we need two functions, $\Phi_i$ and $\Phi_{i,j}$, which play the role of $\phi_i$ and
$\phi_{i,j}$ in the Bernstein inequalities on the simplex discussed in Theorem \ref{thm:B-Lp}.
For $x = (x_1,\ldots, x_d) \in \mathbb{B}^{d}$ and $1 \le i, j \le d$, they are defined by
\[
 \Phi_{i}(x):=\frac{ \sqrt{1-\|x\|^2}}{\sqrt{x^2_{i}+1-\|x\|^2}}  \quad \text { and } \quad \Phi_{i, j}(x):=\frac{1}{\sqrt{x^2_{i}+x^2_{j}}}.
\]
Our main result for the Bernstein inequality on the ball is the following theorem.

\begin{thm}\label{BIforLp}
{
Let $d>1$ and let} $W(x)=\sW(x^2_1,\ldots,x^2_d)$ be a doubling weight on $\mathbb{B}^{d}$. For $1 \le p \le \infty$, $r \in \NN$,
and $f \in \Pi_n^d$,
\begin{equation}\label{pi}
\left\|\Phi_{i}^r \partial_{i}^r f\right\|_{L^p(W, \mathbb{B}^{d})} \leq c\, n^r \|f\|_{L^p(W, \mathbb{B}^{d})}, \quad 1 \leq i \leq d,
\end{equation}
where $r$ is a positive integer, and
\begin{equation}\label{pij}
\left\|\Phi_{i, j} D_{i, j} f \right\|_{L^p(W, \mathbb{B}^{d})} \leq c \, n \|f\|_{L^p(W, \mathbb{B}^{d})}, \quad 1 \leq i<j \leq d,
\end{equation}
where $c$ is a positive constant independent of $n$.
\end{thm}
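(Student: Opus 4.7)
The plan is to reduce Theorem~\ref{BIforLp} on $\BB^d$ to Theorem~\ref{thm:B-Lp} on $\triangle^d$ via the $2^d$-to-one map $y_j = x_j^2$ sending each orthant of $\BB^d$ onto $\triangle^d$. Because $W(x) = \sW(x^2)$ is invariant under every sign flip, one first performs the parity decomposition $f = \sum_{\ve \in \{0,1\}^d} f_\ve$, where
\begin{equation*}
f_\ve(x) := 2^{-d}\sum_{\sigma \in \{-1,1\}^d} \sigma^\ve f(\sigma_1 x_1,\ldots,\sigma_d x_d) = x^\ve g_\ve(x^2),
\end{equation*}
and $g_\ve$ is a polynomial on $\triangle^d$ of degree at most $\lfloor(n-|\ve|)/2\rfloor \le n/2$. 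Since $\|f_\ve\|_{L^p(W,\BB^d)} \le \|f\|_{L^p(W,\BB^d)}$ by the symmetry of $W$, it suffices to prove \eqref{pi} and \eqref{pij} for each individual component $f_\ve$.

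For fixed $\ve$, the change of variables $y_j = x_j^2$ on the positive orthant combined with \eqref{BtoT} gives
\begin{equation*}
\|f_\ve\|_{L^p(W,\BB^d)}^p = \int_{\triangle^d} |g_\ve(y)|^p\, \sW_\ve(y)\, dy,\qquad \sW_\ve(y) := y^{(p\ve-\mathbf{1})/2}\,\sW(y).
\end{equation*}
By Lemma~\ref{lemDW} the weight $\sW(y)/\sqrt{y_1\cdots y_d}$ is doubling on $\triangle^d$, and since each factor $y_j^{p\ve_j/2}$ is a doubling weight with respect to $\sd_\triangle$, $\sW_\ve$ is again doubling on $\triangle^d$, so Theorem~\ref{thm:B-Lp} is applicable with $\sW = \sW_\ve$. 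Next, using $\partial_j = 2 x_j \partial_{y_j}$ on the positive orthant, one checks
\begin{equation*}
\partial_i f_\ve(x) = \begin{cases} 2 x_i\, x^\ve\, \partial_{y_i} g_\ve(y) & \ve_i = 0,\\ x^{\ve-e_i}\bigl[\,g_\ve(y) + 2 y_i\,\partial_{y_i} g_\ve(y)\,\bigr] & \ve_i = 1, \end{cases}
\end{equation*}
while $D_{i,j} f_\ve$ decomposes into the principal piece $\pm\,2\sqrt{y_i y_j}\,y^{\ve/2}(\partial_{y_j} - \partial_{y_i}) g_\ve$ plus a lower-order ``algebraic'' contribution from $D_{i,j}(x^\ve)$ whenever $\ve_i + \ve_j > 0$. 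The defining factors $\Phi_i = \phi_i(y)/\sqrt{y_i}$ and $\Phi_{i,j} = 1/\sqrt{y_i+y_j}$ are exactly what is needed to absorb the extra $\sqrt{y_i}$ or $\sqrt{y_i y_j}$ produced by the chain rule, leaving the simplex Bernstein factors $\phi_i$, $\phi_{i,j}$ on the right-hand side.

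In the ``clean'' case $\ve_i = 0$ (respectively $\ve_i = \ve_j = 0$ for the angular derivative), these identities give
\begin{equation*}
\|\Phi_i \partial_i f_\ve\|_{L^p(W,\BB^d)} = 2\,\|\phi_i\,\partial_{y_i} g_\ve\|_{L^p(\sW_\ve,\triangle^d)},
\end{equation*}
so \eqref{eq:B_p1} (respectively \eqref{eq:B_p2}) applied to $g_\ve$, which has degree $\le n/2$, immediately yields the desired estimate $\le c\,n\,\|f_\ve\|_{L^p(W,\BB^d)}$. For higher $r$ in \eqref{pi} the chain rule expresses $\partial_i^r f_\ve$ as a linear combination of terms $x_i^a\,y^{\ve/2}\,\partial_{y_i}^b g_\ve$ with $b \le r$, and a term-by-term application of the $r$-th order form of \eqref{eq:B_p1} for $g_\ve$ (together with a classical weighted Markov estimate absorbing the lower-$b$ pieces) produces the factor $n^r$.

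The main obstacle is the singular parity case $\ve_i = 1$ (and the analogous mixed-parity configurations for $D_{i,j}$): the second branch of $\partial_i f_\ve$ contributes a non-derivative term $g_\ve$, and $\Phi_i^p$ generates a singular factor $y_i^{-p}$ on the simplex side. The cancellation hinges on the algebraic match between this $y_i^{-p}$ and the $y_i^{(p-1)/2}$ already hidden in $\sW_\ve$: the residual weight is $y_i^{-p/2}\sW_\ve = \sW_{\ve-e_i}$, which is again doubling on $\triangle^d$ by the same multiplicative argument. The non-derivative contribution then reduces to a Markov--Nikolskii-type comparison
\begin{equation*}
\|g_\ve\|_{L^p(\sW_{\ve-e_i},\triangle^d)} \le c\,n\,\|g_\ve\|_{L^p(\sW_\ve,\triangle^d)}
\end{equation*}
for polynomials of degree $\le n/2$, a standard fact for doubling weights on $\triangle^d$. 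Assembling the $2^d$ parity pieces back into $f$ then completes the proof, with the final constant depending on $\sW,r,d,p$ but independent of $n$.
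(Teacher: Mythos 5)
Your proposal follows essentially the same route as the paper: parity decomposition $f=\sum_{\ve}f_\ve$ with $f_\ve=x^\ve g_\ve(x^2)$, reduction to the positive orthant, transfer to $\triangle^d$ via $u_j=x_j^2$ using \eqref{BtoT}, and then Theorem~\ref{thm:B-Lp} applied to $g_\ve$ with a modified doubling weight; the splitting into ``clean'' and ``singular'' parity cases and the treatment of $D_{i,j}$ also coincide with the paper's. Three remarks on details. (1) For $r>1$ the paper iterates: it applies the $r=1$ inequality to $\partial_i^{r-1}f$ with the modified doubling weight $W^*=\Phi_i^{(r-1)p}W$, producing one factor of $n$ per step. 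Your alternative --- a direct Fa\`a di Bruno expansion of $\partial_i^r f_\ve$ into terms $x_i^{2b-r}x^\ve\,\partial_{y_i}^b g_\ve(x^2)$ --- can be made to work, but it is more delicate than a ``term-by-term application of the $r$-th order form of \eqref{eq:B_p1}'': each lower-$b$ term carries a factor $\phi_i^r u_i^{\,b-r}$ on the simplex side, whose control requires pairing a $(2b-r)$-order Bernstein estimate with a weighted Markov estimate of the complementary order $r-b$, and it imports a weighted Markov inequality on $\triangle^d$ as an extra ingredient that the iteration avoids. (2) The ``standard fact'' you invoke for the singular case is exactly what the paper's Lemma~\ref{lem:shrink} (quoted from [GX]) delivers: for a doubling weight on $\triangle^d$ the $L^p$ norm of a degree-$n$ polynomial is carried by the set $\triangle^d_{n,\delta}$, on which $u_i\gtrsim n^{-2}$, so the extra $u_i^{-p/2}$ costs only a factor $n^p$. (3) Your justification that $\sW_\ve$ is doubling --- ``since each factor $y_j^{p\ve_j/2}$ is a doubling weight $\ldots$ $\sW_\ve$ is again doubling'' --- rests on the false general principle that a product of doubling weights is doubling. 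The fact actually being used (that multiplying a doubling weight on $\triangle^d$ by a Jacobi-type factor $u_j^\alpha$ with $\alpha\ge 0$ preserves doubling) is true but non-trivial; the paper is equally terse on this point, so it is a shared omission rather than a defect unique to your write-up, but the stated reasoning is not a valid justification.
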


The inequalities \eqref{pi} and \eqref{pij} are stronger than \eqref{old_pi} and \eqref{old_pij} since $\Phi_i(x) \ge \varphi(x)$
and $\Phi_{i,j}(x) \ge 
1$ for all $1\le i,j \le d$. 
{Furthermore, they are stronger in the order of magnitude for
certain polynomials.
\begin{exam}\label{exam1}
For $x = (x_1,\ldots, x_d) \in \BB^d$, let $f_{i,k}^n(x) = \frac{1}{r!} x_k^r x_i^{n} $. Then, for $1\le k \ne i \le d$,
\begin{equation}\label{eq:exam1}
 \frac{\left\|\Phi_{k}^r \partial_{k}^r f_{i,k}^n \right\|_{L^p(W_\mu, \mathbb{B}^{d})}}{ \left\|\varphi^r  \partial_{k}^r f_{i,k}^n \right\|_{L^p(W_\mu, \mathbb{B}^{d})} }
  = \left(\frac{\Gamma(\mu + \f32)\Gamma(\frac{np+d}2 +\mu + 1 + \frac{r p }{2})}{\Gamma(\frac{rp}{2} + \mu + \f32)\Gamma(\frac{np+d}2+\mu + 1)} \right)^{\f1p} \sim n^{\frac{r}{2}}
\end{equation}
In particular, this shows that the inequality \eqref{pi} is stronger than \eqref{old_pi} by an order of magnitude of $n^{\frac{r}{2}}$ for
the monomials $f_{i}^n$.
\end{exam}
\begin{exam}\label{exam2}
Let $d \ge 3$, $1 \le i < j \le d$, and $1\le k \le d$ such that $k \ne i$ and $k \ne j$. For $x = (x_1,\ldots, x_d) \in \BB^d$, let
$f_{i, k}^n(x) =x_i x_k^n$. Then
\begin{equation}\label{eq:exam2}
 \frac{\left\|\Phi_{i, j} D_{i, j} f_{i,k}^n \right\|_{L^p(W_\mu, \mathbb{B}^{d})}}{\left\|  D_{i, j} f_{i,k}^n \right\|_{L^p(W_\mu, \mathbb{B}^{d})}}
  = \left( \frac{ \Gamma(\frac{np+d}2+\mu +1+ \f{p}{2})}
    {\Gamma(\frac{p+2}{2}) \Gamma( \frac{np+d}2 + \mu+1)} \right)^{\f1p} \sim n^{\frac{1}{2}}.
\end{equation}
In particualr, this shows that the inequality \eqref{pij} is stronger than \eqref{old_pij} by an order of magnitude of $\sqrt{n}$ for
the monomials $f_{i, k}^n$.
\end{exam}
}

A couple of further remarks on the inequalities \eqref{pi} and \eqref{pij} are in order.
\begin{rem}
The denominators in $\Phi_i$ and $\Phi_{i,j}$ do not reduce a singularity for the integral. For $\Phi_i$, this is evident since
$0 \le \Phi_i (x) \le 1$. For $\Phi_{i,j}$, this follows from the definition of $D_{i,j}$, which shows
$$
    \Phi_{i,j}(x) D_{i,j} = \frac{x_i}{\sqrt{x^2_i+x^2_j}} \partial_j -   \frac{x_j}{\sqrt{x^2_i+x^2_j} }\partial_i,
$$
so that both factors in front of the derivatives have values in $[0,1]$
\end{rem}

\begin{rem}
The inequality \eqref{pij} for $D_{i,j}$ does not hold for $D_{i,j}^r$ with $r > 1$ in general. Indeed, a quick computation shows,
for example,
$$
  D_{i,j}^2 = x_i^2 \partial_i^2 + x_j^2 \partial_j^2 - x_i \partial_j - x_j \partial_i,
$$
where $\Phi_{i,j}^2(x) = \frac{1}{x_i^2+x_j^2}$, so that the first order partial derives in $\frac{1}{\Phi_{i,j}^2} D_{i,j}^2$
has a singularity of the first order. Furthermore, the inequality also does not hold for $(\Phi_{i,j} D_{i,j})^r$ for $r >1$,
since $(\Phi_{i,j} D_{i,j})^r = \Phi_{i,j}^r D_{i,j}^r$ as can be seen from $D_{i,j} \Phi_{i,j}(x) =0$. We note, however,
that if $W^{p(r-1)} \Phi_{i,j}$ is a doubling weight for $p \ge 1$ and $r>1$, then the inequality
$$
\left\| \Phi_{i,j}^2 D_{i,j}^2 f \right \|_{L^p(W, \BB^d)} \le c\, n^2 \left \| f \right \|_{L^p(W, \BB^d)}
$$
holds, as can be seen by following the proof of \eqref{pi} for $r > 1$. The condition, however, does not hold for
the classical weight function $W_\mu$ if $r \ge 2$.
\end{rem}

\begin{rem}
The above inequalities are related to Siciak's extremal function, in a way similar to the case of the simplex, as
discussed in Subsection 2.3. The explicit formula for the extremal function $\Phi_E$ in \eqref{extremPhi}
for $E = \BB^d$ is given in \cite{Baran},
\begin{align*}
  \Phi_{\mathbb{B}^{d}}(z)=\left(h(|z_1|^2 + \ldots + |z_d|^2 + |z^2-1|) \right)^{1/2},
\end{align*}
form which one can deduce that the Dini derivative of $V_{\BB^d}$, defined in \eqref{extremE}, is
\begin{align*}
  D^{+}_i V_{\mathbb{B}^{d}}(x)= \frac{\sqrt{x^2_{i}+1-\|x\|^2}}{\sqrt{1-\|x\|^2}}= \frac{1} { \Phi_{i}(x)}.
\end{align*}
Thus, just like $\phi_i$ for the simplex, $\Phi_i$ is the reciprocal of the Dini derivative of $V_E$ for $E = \BB^d$.
In particular, this further enforces the suggestion that the inequality \eqref{pi} can be seen as the $L^p$ version of the inequality \eqref{Baran_extremal}
when $K = \mathbb{B}^d$.
\end{rem}

\subsection{Proof of Theorem~\ref{BIforLp}}

We introduce the following notation. For any function $f : \mathbb{R}^d \rightarrow \mathbb{R}$ and $\ve \in  \{0,1\}^d$, define
$$
   f_\ve (x) = \prod_{i=1}^d x_i^{\ve_i}  \frac{1}{2^d}  \sum_{ \tau \in \{1, -1\}^d } f (\tau x),
$$
where $\tau x = (\tau_1 x_1, \ldots, \tau_d x_d)$. Then
$$
      f(x) = \sum_{\ve \in \{0,1\}^d} f_\ve (x).
$$
If $\ve_i =0$, $f_\ve$ is even in $x_i$ and if $\ve_i = 1$, $f_\ve$ is odd in $x_i$. For each
$\ve$, define the index set $J(\ve) = \{j: \ve_j =1\}$, and let $x_\ve =  \prod_{j \in J(\ve)} x_j$. Then, if $f$ is a polynomial of degree $n$,
the parity of $f_\ve$ means that we can write $f_\ve$ as
$$
  f_\ve(x) = x_\ve \cdot g_\ve \big(x_1^2, \ldots, x_d^2\big),
$$
where $g$ is a polynomial of degree $(n - |J(\ve)|)/2$. This construction is motivated by the relation between orthogonal
polynomials on the unit ball and on the simplex, as shown in \eqref{eq:VB=VT}, which shows, in particular, how polynomials
on the unit ball can be generated by polynomials on the triangle by using $x \mapsto (x_1^2,\ldots,x_d^2)$ and $x_\ve$.

To illustrate the above notation and clarify its meaning, let us consider the case $d = 2$. For a function $f : \mathbb{R}^2 \to \mathbb{R}$, we have
$\ve \in \{0,1\}^2 = \{(0,0), (1,0), (0,1), (1,1)\}$.
Then
\[
   f_{\ve}(x_1, x_2)
   =  x_1^{\ve_1} x_2^{\ve_2} \frac{1}{4} \sum_{\tau_1, \tau_2 \in \{1, -1\}}  f(\tau_1 x_1, \tau_2 x_2).
\]
Hence,
\[
   f(x_1, x_2)
   = f_{00}(x_1, x_2)
     + f_{10}(x_1, x_2)
     + f_{01}(x_1, x_2)
     + f_{11}(x_1, x_2),
\]
where
\begin{itemize}
  \item $f_{00}$ is even in both variables,
  \item $f_{10}$ is odd in $x_1$ and even in $x_2$,
  \item $f_{01}$ is even in $x_1$ and odd in $x_2$,
  \item $f_{11}$ is odd in both variables.
\end{itemize}
If $f$ is a polynomial of degree $n$, then the parity of each component implies that
\[
\begin{aligned}
   f_{00}(x_1, x_2) &= g_{00}(x_1^2, x_2^2), \\
   f_{10}(x_1, x_2) &= x_1\, g_{10}(x_1^2, x_2^2), \\
   f_{01}(x_1, x_2) &= x_2\, g_{01}(x_1^2, x_2^2), \\
   f_{11}(x_1, x_2) &= x_1 x_2\, g_{11}(x_1^2, x_2^2),
\end{aligned}
\]
where each $g_{\ve}$ is a polynomial of degree $(n - |J(\ve)|)/2$.

With the above notation, any polynomial $f \in \Pi_n^d$ can be represented in the following form:
 \[
 f(x) =\sum_{\ve \in \{0,1\}^d} f_{\ve}(x)=\sum_{\ve \in \{0,1\}^d} x_\ve \cdot g_\ve \big(x_1^2, \ldots, x_d^2\big),
 \]
 where each $g_\ve$ is a polynomial of degree less than or equal to $n/2$. Then, by the triangle inequality and the linearity of $\partial_i$ and $D_{i,j}$,
\begin{align}
  &\left\|\Phi_{i} \partial_{i} f\right\|_{L^p(W, \mathbb{B}^{d})} \leq \sum_{\ve \in \{0,1\}^d} \left\|\Phi_{i} \partial_{i} f_\ve \right\|_{L^p(W, \mathbb{B}^{d})}, \label{inq:first} \\
  &\left\|\Phi_{i, j} D_{i, j} f \right\|_{L^p(W, \mathbb{B}^{d})} \leq \sum_{\ve \in \{0,1\}^d}  \left\|\Phi_{i, j} D_{i, j} f_\ve \right\|_{L^p(W, \mathbb{B}^{d})}. \label{inq:second}
\end{align}
Since $W$ is a reflection-invariant weight function (i.e. $W(x)=W(|x_1|,\ldots,|x_d|)$), we have
\begin{align*}
\left\| f_\ve \right\|_{L^p(W, \mathbb{B}^{d})} = 2^d \left\| f_\ve \right\|_{L^p(W, \mathbb{B}^{d}_{+})},
\end{align*}
where $\mathbb{B}^{d}_{+}:=\{x \in \mathbb{B}^{d} : x_i \geq 0, \, i=1,\ldots,d\}$. On the other hand,
\begin{align*}
\left\| f_\ve \right\|^p_{L^p(W,\mathbb{B}^{d}_{+})}&= \int_{\mathbb{B}^{d}_{+}} \left|  \prod_{i=1}^d x_i^{\ve_i}   \frac{1}{2^d}
   \sum_{ \tau \in \{1, -1\}^d } f (\tau x) \right|^p W(x) \d x
\\ &\leq \frac{1}{2^{d}} \sum_{ \tau \in \{1, -1\}^d }  \int_{\mathbb{B}^{d}_{+}} \left|  f (\tau x) \right|^p W(x) \d x =
 \int_{\mathbb{B}^{d}} \left|  f (x) \right|^p W(x) \d x = \left\| f \right\|^p_{L^p(W,\mathbb{B}^d)}.
\end{align*}
Hence, for every $\ve \in \{0,1\}^d$, we have
\begin{align}\label{BtoBplus}
\left\| f_\ve \right\|_{L^p(W, \mathbb{B}^{d}_{+})} \leq  \left\| f \right\|_{L^p(W, \mathbb{B}^{d})}.
\end{align}
Thus, by \eqref{inq:first} and \eqref{inq:second}, the proof of the main result for $r =1$ is reduced to establish the Bernstein
inequalities for $f_\ve$ for each $\ve \in \{0, 1\}^d$.

For the partial derivatives $\partial_i f_\ve$, the analysis can essentially be reduced to two cases: $i \notin J(\ve)$ and $i \in J(\ve)$.
First, consider the case $i \notin J(\ve)$. This means that the variable $x_i $ does not appear in the expression $x_\ve$.
In this situation, we have
\[
\partial_i  f_\ve = \partial_i \left\{ x_\ve \cdot g_\ve \big(x_1^2, \ldots, x_d^2\big) \right\} = x_\ve  2x_i \partial_i g_\ve \big(x_1^2, \ldots, x_d^2\big).
\]
Therefore, by the symmetry of the integrand, we have
\begin{align}\label{symmetry}
\left\|\Phi_{i} \partial_{i} f_\ve \right\|^p_{L^p(W, \mathbb{B}^{d})} = 2^d \int_{\mathbb{B}^{d}_{+}} \left| \Phi_{i}(x) 2x_i x_\ve  \partial_i g_\ve \big(x_1^2, \ldots, x_d^2\big) \right|^p W(x) \d x.
\end{align}
Then, by \eqref{BtoT} and the identity $\phi_{i}(x^2_1,\ldots,x_d^2)=x_i \Phi_{i}(x_1,\ldots,x_d)$, we obtain
\begin{align*}
\int_{\mathbb{B}^{d}_{+}} & \left| \Phi_{i}(x) 2x_i x_\ve  \partial_i g_\ve \big(x_1^2, \ldots, x_d^2\big) \right|^p W(x) \d x  \\
  & \qquad\qquad = 2^p \int_{\triangle^d} \left| \phi_{i}(u) \sqrt{u_\ve}  \partial_i g_\ve (u) \right|^p \sW(u) \frac{\d u}{ 2^d \prod_{l=1}^d \sqrt{u_l}}.
\end{align*}
Here $\sqrt{u_\ve} =  \prod_{j \in J(\ve)} \sqrt{u_j}$. Now, by Lemma \ref{lemDW}, we can apply the Bernstein inequality \eqref{eq:B_p1}
on the simplex to $g_\ve(u_1,\dots,u_d)$ with the doubling weight
\[
\Theta_{\ve}(u_1,\ldots,u_d)=   \frac{(\sqrt{u_\ve})^p \cdot \sW(u)}{ 2^d \prod_{l=1}^d \sqrt{u_l}},
\]
which leads to, after performing the change of variables $u_l=x^2_l$,
\[
\int_{\mathbb{B}^{d}_{+}} \left| \Phi_{i}(x) 2x_i x_\ve  \partial_i g_\ve \big(x_1^2, \ldots, x_d^2\big) \right|^p W(x) \d x \le c\, n^p \int_{\mathbb{B}^{d}_{+}} \left| f_\ve(x) \right|^p  W(x) \d x.
\]
Thus, by \eqref{symmetry}, we obtain
\[
\left\|\Phi_{i} \partial_{i} f_\ve \right\|_{L^p(W, \mathbb{B}^{d})} \leq  c\, n \|f_\ve\|_{L^p(W, \mathbb{B}^{d}_{+})} .
\]

Next, we consider the case in which $i$ belongs to $J(\ve)$. Taking the derivative,
\begin{equation} \label{eq:2terms}
\partial_i f_\ve=\partial_i \left\{ x_\ve \cdot g_\ve \big(x_1^2, \ldots, x_d^2\big) \right\}=   \frac{x_\ve}{x_i} g_\ve(x_1^2, \ldots, x_d^2) + 2x_i x_\ve\partial_i g_\ve(x_1^2, \ldots, x_d^2).
\end{equation}
We need to estimate the two terms on the right-hand side separately. The second term contains the derivative $\partial_i$ and it
has already been estimated above. It remains to estimate the first term, the one without the derivative. To this end, we will need
the following lemma \cite[Lemma 3.9]{GX}.
 \footnote{In \cite{GX}, the $\frac{\delta}{n^2}$ in the definition $\triangle_{n,\delta}^d$ is mistakenly written as $\frac{\delta}{n}$.}
\begin{lem} \label{lem:shrink}
Let $\sW$ be a doubling weight function on $\triangle^d$. For $\delta > 0$ and $n\in \NN,$ let
$$
\triangle_{n,\delta}^d =\{x\in\triangle^d: \tfrac\delta{n^2}< x_i \leq 1-\tfrac{\delta}{n^2}, 1 \le i \le d+1\},
$$
where $x_{d+1} = 1-|x|$. Then, for $f\in\Pi^d_n$, $1\leq p< \infty$, 
\begin{align}\label{shrink}
   \int_{\triangle^d}|f(x)|^p \sW(x)\d x\leq c_{\delta}\int_{\triangle_{n,\delta}^d}|f(x)|^p \sW(x)\d x.
\end{align}
\end{lem}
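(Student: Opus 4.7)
The plan is to prove this shrinkage estimate by a covering argument combined with a local Christoffel-type bound for polynomials under a doubling weight. I would first decompose the complementary region as
\[
\triangle^d \setminus \triangle_{n,\delta}^d \;=\; \bigcup_{i=1}^{d+1} S_i, \qquad S_i := \{x\in\triangle^d: x_i \leq \tfrac{\delta}{n^2}\},
\]
absorbing the ``upper'' strips $\{x_i > 1-\delta/n^2\}$ into the other $S_j$'s via the simplex constraint (for $d\geq 2$). By relabeling coordinates, it then suffices to establish $\int_{S_1} |f|^p \sW\,\d x \leq c_\delta \int_{\triangle_{n,\delta}^d} |f|^p \sW\,\d x$; summing over $i$ and using the triangle inequality finishes the job.

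The key analytic input I would invoke is the local averaging bound for polynomials in a doubling setting: for $f \in \Pi_n^d$ and every $x \in \triangle^d$,
\[
|f(x)|^p \;\leq\; \frac{c}{\sW(\sB(x,1/n))}\int_{\sB(x, 1/n)} |f(y)|^p\,\sW(y)\,\d y,
\]
where $\sB(x,r)$ denotes the ball with respect to the intrinsic metric $\sd_\triangle$. This is a standard consequence of the Christoffel-function characterization $\lambda_n(\sW;x) \sim \sW(\sB(x,1/n))$ for doubling weights on the simplex and can be quoted as a black-box from the literature on localized kernels.

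The geometric step would exploit the anisotropy of $\sd_\triangle$ near the face $\{x_1 = 0\}$: a ball $\sB(x, r/n)$ centered at $x \in S_1$ extends a distance of order $r^2/n^2$ in the $x_1$-direction and of order $r/n$ in the other directions. Choosing $r=r(\delta)$ large enough, I can arrange that $\sB(x,r/n) \cap \triangle_{n,\delta}^d$ has weighted measure comparable, via finitely many applications of the doubling property, to $\sW(\sB(x,1/n))$. Substituting this into the local Christoffel bound, integrating over $x\in S_1$, and applying Fubini with the bounded-multiplicity property of the implicit cover of $\triangle_{n,\delta}^d$ by balls $\sB(x,r/n)$, yields the desired inequality.

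The main obstacle I foresee is the geometric comparison near multi-face corners of the simplex, where $x$ is simultaneously close to several faces $\{x_i = 0\}$: there the doubling property must be applied consistently across the different anisotropic scales, and the constant $r=r(\delta)$ must be chosen uniformly. Once this quantitative geometric verification is in hand, the remainder of the argument is a routine combination of the local Christoffel estimate, the doubling of $\sW$, and a Fubini-type switch of integration.
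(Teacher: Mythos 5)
The paper does not prove this lemma; it imports it verbatim from \cite[Lemma~3.9]{GX}, adding only a footnote that corrects a typo ($\delta/n$ should be $\delta/n^2$) in the definition of $\triangle_{n,\delta}^d$ in \cite{GX}. So there is no in-paper proof to compare against; what follows is a review of your proposal on its own merits.

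Your decomposition of $\triangle^d\setminus\triangle_{n,\delta}^d$ into the strips $S_i$ is fine, and invoking the localized Christoffel-function/maximal-function estimate for doubling weights as a black box is reasonable and standard. The genuine gap is at the step ``substituting this into the local Christoffel bound.'' The bound you quote reads
\[
|f(x)|^p \;\lesssim\; \frac{1}{\sW(\sB(x,1/n))}\int_{\sB(x,1/n)}|f|^p\,\sW ,
\]
and the geometric comparison you establish is $\sW\bigl(\sB(x,r/n)\cap\triangle_{n,\delta}^d\bigr)\sim\sW\bigl(\sB(x,1/n)\bigr)$. That comparison lets you replace the \emph{denominator}, but it does nothing to let you shrink the \emph{domain of integration} from $\sB(x,1/n)$ (which sticks out of $\triangle_{n,\delta}^d$) down to $\sB(x,r/n)\cap\triangle_{n,\delta}^d$. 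If you integrate over $x\in S_1$ and apply Fubini with the full ball $\sB(x,1/n)$ as you must, the inner integral of $\sW(x)/\sW(\sB(x,1/n))$ is $O(1)$ and you land on $\int_{S_1}|f|^p\sW\lesssim\int_{\triangle^d}|f|^p\sW$ — a tautology, not the shrinkage estimate. In other words, the argument as written is circular: the local Christoffel bound always re-injects the boundary strip you are trying to discard.

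To close the gap you need an additional polynomial ingredient, not just a measure comparison. The standard fix is a Remez-type local estimate: for $f\in\Pi_n^d$ and any measurable $E\subset\sB(x,r/n)$ whose \emph{Lebesgue} measure is $\gtrsim|\sB(x,r/n)|$, one has $|f(x)|^p\lesssim\sW(E)^{-1}\int_E|f|^p\,\sW$. This is proved by rescaling $\sB(x,r/n)$ to a reference ball on which $f$ becomes a polynomial of bounded degree and then applying the classical Remez inequality together with doubling. Note two things you would still have to check: (i) the relevant measure comparison for the Remez step is in Lebesgue measure, not $\sW$-measure (doubling alone does not give you $A_\infty$, so a $\sW$-measure comparison would not suffice); and (ii) the anisotropic geometry of $\sd_\triangle$ near lower-dimensional faces, which you flag as an ``obstacle,'' must be verified so that $\sB(x,r/n)\cap\triangle_{n,\delta}^d$ really does occupy a fixed Lebesgue fraction of $\sB(x,r/n)$ uniformly for $x\in S_1$ (including $x$ close to several faces simultaneously). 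With the Remez input in place and these checks done, your covering-plus-Fubini scheme becomes a legitimate proof.
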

Applying (\ref{shrink}) to $g_\ve(u_1,\dots,u_d)$ with the weight
$
\Theta_{\ve}/ \sqrt{u^p_i}
$
gives
\begin{align*}
\int_{\triangle^d} \left| \frac{\sqrt{u_\ve}}{\sqrt{u_i}}  g_\ve(u)\right|^p \frac{ \sW(u)}{ 2^d \prod_{l=1}^d \sqrt{u_l}} \d u \leq c\, n^p \int_{\triangle^d} \left| \sqrt{u_\ve} g_\ve(u)\right|^p \frac{ \sW(u)}{ 2^d \prod_{l=1}^d \sqrt{u_l}} \d u.
\end{align*}
Making the substitution $u_l = x_l^2$ to go back to $\BB^d$ again, it follows that
\begin{align}\label{ShrinkBall}
\int_{\mathbb{B}^{d}_{+}} \left| \frac{x_\ve}{x_i} g_\ve(x_1^2, \ldots, x_d^2)\right|^p W(x) \d x \leq c\, n^p \int_{\mathbb{B}^{d}_{+}} \left|f_\ve(x) \right|^p W(x) \d x.
\end{align}
Therefore, by $\Phi_{i}(x) \leq 1$ and the symmetry of the integrand,
\[
\int_{\mathbb{B}^{d}} \left|\Phi_{i}(x) \frac{x_\ve}{x_i} g_\ve(x_1^2, \ldots, x_d^2)\right|^p W(x) \d x \leq 2^d c\, n^p \int_{\mathbb{B}^{d}_{+}} \left| f_\ve(x) \right|^p W(x) \d x,
\]
which takes care of the first term in the right-hand side of \eqref{eq:2terms}. Consequently, we obtain
in the case $i \in J(\ve)$,
 \[
\int_{\mathbb{B}^{d}} \left| \Phi_{i}(x) \partial_i f_\ve(x)\right|^p W(x) \d x \leq 2^{d+p-1}(2^p c' + c) n^p \int_{\mathbb{B}^{d}_{+}} \left| f_\ve(x)\right|^p W(x) \d x.
\]
Thus, we have shown that, for all $\ve \in \{0,1\}^d$ and $i \in \{1,\dots,d\}$,
\begin{align*}
\left\|\Phi_{i} \partial_{i} f_\ve\right\|_{L^p(W, \mathbb{B}^{d})} \leq c\, n \|f_\ve\|_{L^p(W, \mathbb{B}^{d}_{+})},
\end{align*}
which proves, by \eqref{inq:first} and \eqref{BtoBplus}, the desired inequality \eqref{pi} for $r =1$.

The proof for $r >1$ follows from iteration, similar to the proof in the case of the simplex in \cite{GX}. Indeed, for fixed $i$ and $p$,
define $W^*(x) = \Phi_i^{(r-1)p}(x) W(x)$, which is a doubling weight. Hence,
\begin{align*}
  \left \| \Phi_i^r \partial_i^r f \right \|_{L^p(W, \BB^d)}  \, & =   \left \|\Phi_i\partial_i \partial_i^{r-1} f \right \|_{L^p(W^*, \BB^d)} \\
   & \le c\, n   \left \| \partial_i^{r-1} f \right \|_{L^p(W^*, \BB^d)} = c \, n  \left \|\Phi_i^{r-1} \partial_i^{r-1} f \right \|_{L^p(W, \BB^d)},
\end{align*}
which allows us to complete the proof by iteration.

To prove the similar result for $D_{i, j}$, we have to consider four possible cases, depending on whether the indices $i$ and
$j$ are elements of $J(\ve)$ or not. First, we examine the case $i,j \in J(\ve)$. Then
\begin{align*}
D_{i, j} f_\ve(x_1,\dots,x_d)=\, &D_{i, j} \left\{x_\ve g_\ve(x_1^2, \ldots, x_d^2) \right\}  \\= \, & x_j  \frac{x_\ve}{x_i} g_\ve(x_1^2, \ldots, x_d^2)  + x_j x_\ve 2x_i \partial_{i} g_\ve(x_1^2, \ldots, x_d^2) \\
   & - x_i \frac{x_\ve}{x_j} g_\ve(x_1^2, \ldots, x_d^2) - x_i x_\ve  2x_j \partial_{j} g_\ve(x_1^2, \ldots, x_d^2).
\end{align*}
From inequality (\ref{ShrinkBall}), using $ \Phi_{i,j}(x)x_s \leq 1$ for $s \in  \{i,j\}$ and the symmetry of the integrand, we obtain
\begin{align*}
\int_{\mathbb{B}^{d}} \left| \Phi_{i,j}(x)  \left(\frac{x_j}{x_i}- \frac{x_i}{x_j}\right) f_\ve(x)\right|^p W(x) \d x
    \leq 2^d c \, n^p \int_{\mathbb{B}^{d}_{+}} \left| f_\ve(x)\right|^p W(x) \d x.
\end{align*}
Now, by applying the inequality \eqref{eq:B_p2} to $g_\ve(u_1,\dots,u_d)$ with the weight $\Theta_{\ve}$,
and performing the change of variables $u_l=x^2_l$, we obtain
\[
\int_{\mathbb{B}^{d}_{+}} \left| \Phi_{i,j}(x) x_j x_\ve 2x_i  \partial_{i,j} g_\ve(x_1^2, \ldots, x_d^2)\right|^p W(x) \d x \leq (2 c\,n)^p \int_{\mathbb{B}^{d}_{+}} \left| f_\ve(x)\right|^p W(x) \d x.
\]
Since the integrands in the integrals on the left-hand side are symmetric,
the integrals over $\mathbb{B}^{d}_{+}$ can be replaced by those over $\mathbb{B}^{d}$, provided that the right-hand side is multiplied by $2^d$. Therefore, it follows that
\begin{align}\label{Dij}
\left\|\Phi_{i,j} D_{i, j} f_\ve\right\|_{L^p(W, \mathbb{B}^{d})} \leq c\, n \|f_\ve\|_{L^p(W, \mathbb{B}^{d}_{+})} .
\end{align}
It remains to show that the above estimate is valid in the cases where $i \notin J(\ve)$ or $j \notin J(\ve)$. In these cases, we have
\begin{align*}
D_{i, j} \left\{x_\ve g_\ve(x_1^2, \ldots, x_d^2) \right\}   =x_j x_\ve  2x_i \partial_{i} g_\ve(x_1^2, \ldots, x_d^2) - x_i x_\ve   2x_j \partial_{j} g_\ve(x_1^2, \ldots, x_d^2) + R(x),
\end{align*}
where
\begin{align*}
R(x)=                                                                                   \begin{cases}
\, \frac{x_j}{x_i} x_\ve g_\ve(x_1^2, \ldots, x_d^2), & \text{if } i\in J(\ve),\ j\notin J(\ve),\\[2pt]
\, -\frac{x_i}{x_j} x_\ve g_\ve(x_1^2, \ldots, x_d^2), & \text{if } j\in J(\ve),\ i\notin J(\ve),\\[2pt]
0, & \text{if } i,j\notin J(\ve).
\end{cases}
\end{align*}
The proofs for these cases can be derived directly from the case $i, j \in J(\ve)$. Thus, the inequality~\eqref{Dij} is valid for
every $\ve$. By symmetry, \eqref{inq:second} and \eqref{BtoBplus}, we then obtain inequality~\eqref{pij}. This concludes
the proof for $1\leq p< \infty$. The case $p=\infty$ proceeds analogously, and we omit the details.
 \qed

{\subsection{Proof for examples}
To verify Example \ref{exam1}, we use the integral identity
\begin{equation} \label{eq:int-beta}
 \int_{\BB^d} |x_k|^a (1-\|x\|^2)^b \d x = \pi^{\f{d-1}2} \frac{\Gamma(\frac{a+1}{2}) \Gamma(b+1)}{\Gamma(\frac{a+d}{2}+b+1)},
\end{equation}
where $a, b > -1$, $1 \le k \le d$, and $x = (x_1,\ldots, x_d)$, which can be easily verified, and it holds for the case $\BB^1 = [-1,1]$.
For convenience, we denote by $c_d^b$ the constant when $a = 0$; that is, using $\Gamma(\f12) = \sqrt{\pi}$,
\begin{equation} \label{eq:cdb}
   c_d^b =  \int_{\BB^d}  (1-\|x\|^2)^b \d x = \pi^{\f{d}2} \frac{\Gamma(b+1)}{\Gamma(\frac{d}{2}+b+1)}.
\end{equation}
An immediate consequence of \eqref{eq:int-beta} gives
$$
\left\|\varphi^r  \partial_{k}^r f_{i,k}^n \right\|_{L^p(W_\mu, \mathbb{B}^{d})}^p = \int_{\BB^d} |x_i|^{n p}(1-\|x\|^2)^{\mu + \frac{r p}{2}} \d x
  = \pi^{\f{d-1}2} \frac{\Gamma(\frac{np+1}{2}) \Gamma(\frac{rp}{2}+\mu+1)}{\Gamma(\frac{np+d}{2} +\frac{rp}{2} +\mu+1)}.
$$
Moreover, by symmetry, we can assume $i < d$ and only need to consider the integral
\begin{align*}
 \left\|\Phi_{d}^r \partial_{d}^r f_{i,d}^n \right\|_{L^p(W_\mu, \mathbb{B}^{d})}^p
   = \int_{\BB^d} \frac{ |x_i|^{n p}}{(x_d^2 + 1-\|x\|^2)^{\frac{rp}{2}}} (1-\|x\|^2)^{\mu + \frac{r p}{2}} \d x.
\end{align*}
Setting $x = (x', x_d)$ and making a change of varaible $x_d = \sqrt{1-\|x'\|^2} t$, we obtain
\begin{align*}
 \left\|\Phi_{d}^r \partial_{d}^r f_{i,d}^n \right\|_{L^p(W_\mu, \mathbb{B}^{d})}^p
& = \int_{\BB^{d-1}} |x_i|^{n p} \int_{-\sqrt{1-\|x'\|^2}}^{\sqrt{1-\|x'\|^2}}
       \frac{(1-\|x'\|^2- x_d^2 )^{\frac{rp}{2}+\mu}}{(1-\|x'\|^2)^{\mu + \frac{r p}{2}}} \d x_d \d x' \\
& = \int_{\BB^{d-1}} |x_i |^{n p} (1-\|x'\|^2)^{\mu+\f12} \d x' \int_{-1}^1
        (1-t^2)^{\mu+\frac{r p}{2}} \d t \\
& = \pi^{\frac{\d-1}2} \frac{\Gamma(\frac{rp}{2}  +\mu+1)\Gamma(\mu+\f32)\Gamma(\frac{n p +1}{2})}
    {\Gamma(\frac{rp+1}{2} +\mu+1) \Gamma(\frac{np+d}{2}  +\mu+1)}
\end{align*}
by \eqref{eq:int-beta} for $\BB^{d-1}$ and $\BB^1 = [-1,1]$. Putting these together verifies \eqref{eq:exam1}. The asymptotic as $n \to \infty$ follows from the well-known relation $\Gamma(z+a)/\Gamma(z+b) \sim z^{a-b}$ for $z \to \infty$. }

{
For Example \ref{exam2}, we need one more integral identity. For $d \ge r \ge 1$, write $x = (u_x, u_x')$
with $u_x = (x_1,\ldots, x_r)$. Then
\begin{equation} \label{eq:intB-B}
  \int_{\BB^d} f(u_x)(1-\|x\|^2)^\mu \d x = \int_{\BB^r} f(u)(1- \|u\|^2)^{\mu+ \f{d-r}{2}} \d u \int_{\BB^{d-r}} (1-\|v\|^2)^\mu \d v,
\end{equation}
which can be easily verified. By symmetry, we only need to verfiy \eqref{eq:exam2} for $i =1$, $j=2$, and $k =3$.
Since
$$
  D_{1,2} f_{1,3}(x) = D_{1,2} x_1 x_3^n = - x_2 x_3^n,
$$
applying \eqref{eq:intB-B} with $r =2$ and using the constant $c_{d-2}^\mu$ in \eqref{eq:cdb}, we obtain
\begin{align*}
  \left\|  D_{1, 2} f_{1,3}^n \right\|_{L^p(W_\mu, \mathbb{B}^{d})}^p & =
    \int_{\BB^d} |x_2|^p |x_3|^{n p} (1-\|x\|^2)^\mu \d x  \\
    & = c_{d-2}^\mu \int_{\BB^2}  |x_2|^p |x_3|^{n p} (1- x_2^2 - x_3^2)^{\mu + \frac{d-2}{2}}\d x_2 \d x_3 \\
    & = c_{d-2}^\mu \int_{-1}^1 |x_2|^p (1-x_2^2)^{\mu+\frac{d-1}{2} + \frac{np}{2}} \d x_2
    \int_{-1}^1 |t|^{n p} (1- t^2)^{\mu + \frac{d-2}{2}} \d t \\
    & = \pi^{\frac{d-2}{2}} \frac{\Gamma(\frac{np+1}{2}) \Gamma(\frac{p+1}{2})\Gamma(\mu+1) } {\Gamma(\frac{np+d}{2} +\mu +1+ \frac{p}{2})},
\end{align*}
where we have used \eqref{eq:int-beta} again. Now, applying \eqref{eq:intB-B} with $r =3$, we obtain
\begin{align*}
    \left\|\Phi_{1, 2} D_{1, 2} f_{1,3}^n \right\|_{L^p(W_\mu, \mathbb{B}^{d})}^p & = c_{d-3}^\mu
      \int_{\BB^3} \frac{|x_2|^p |x_3|^{n p}}{ (x_1^2+x_2^2)^\f{p}{2}} (1- x_1^2-x_2^2-x_3^2)^{\mu +\frac{d-3}{2}} \d x_1 \d x_2 \d x_3 \\
       & = c_{d-3}^\mu \int_{\BB^2} \frac{|x_2|^p}{ (x_1^2+x_2^2)^\f{p}{2}}
        (1- x_1^2-x_2^2)^{\mu +\frac{d-2}{2}+\frac{np}{2}} \d x_1 \d x_2 \\ & \qquad\qquad \times  \int_{-1}^1 |u|^{np}(1-u^2)^{\mu + \frac{d-3}{2}} \d u.
\end{align*}
The second integral on the right-hand side is evaluated by \eqref{eq:int-beta}, whereas the first integral is equal to, using the polar
coodinates $(x_2, x_3) = r (\cos \t, \sin \t)$,
$$
   \int_0^1 r (1-r^2)^{\frac{np}{2} + \mu + \frac{d-2}{2}} \d r   \int_0^{2 \pi} |\sin \t|^p \d \t
      = \frac{ \sqrt{\pi} \Gamma(\frac{p+1}{2})} {(\frac{np}{2} + \mu + \frac{d}{2}) \Gamma(\frac{p+2}{2})}.
$$
Thus, putting together, we obtain
$$
 \left\|\Phi_{1, 2} D_{1, 2} f_{1,3}^n \right\|_{L^p(W_\mu, \mathbb{B}^{d})}^p = \pi^{\frac{d-2}{2} }\frac{ \Gamma(\frac{n p+1}{2})\Gamma(\frac{p+1}{2}) \Gamma(\mu+1)}{\Gamma(\frac{p+2}{2}) \Gamma(\frac{n p +d}{2} +\mu+1)}.
$$
From these computations, the identity in \eqref{eq:exam2} follows immediately.
}
\section{Spectral Operator and $L^2$ Bernstein Inequalities on the Ball}
\setcounter{equation}{0}

In this section, we discuss Bernstein inequalities in the $L^2$ norm for the weight function $W_\mu$ on the unit ball.
The proof relies on the decomposition of the spectral operator $\CD_\mu$ in \eqref{eq:eigenB}. In the first subsection,
we utilize the decomposition in \eqref{DmuDec1} to give a new proof of the known inequalities, including the recent
result in \cite{K2023}. In the second subsection, we provide another decomposition of $\CD_\mu$, which leads to
another family of sharp Bernstein inequalities.

Throughout this section, we denote the norm of $f \in L^2(W_\mu, \BB^d)$ by $\|f\|_{\mu,2}$.

\subsection{Spectral Operator and Bernstein Inequality}

The main result in this section is the sharp Bernstein inequalities in $L^2(W_\mu, \BB^d)$ norm, stated in the following
theorem.

\begin{thm} \label{thm:B-ineq0Ball}
Let $d \ge 2$, $n = 0,1,2,\ldots$ and $f \in \Pi_n^d$. Then
\begin{equation} \label{eq:B-ineq0ball}
 \sum_{i=1}^d \left \| \sqrt{1-\|x\|^2} \partial_i f \right \|_{\mu,2}^2 +
  \sum_{1\le i<j\le d} \left \|D_{ij} f \right \|_{\mu,2}^2 \le n(n+2\mu+d) \|f \|_{\mu,2}^2
\end{equation}
and the equality holds if and only if $f \in \CV_n(W_\mu, \BB^d)$.
Furthermore, the following two inequalities are also sharp,
\begin{align}
 \sum_{i=1}^d \left \| \sqrt{1-\|x\|^2} \partial_i f \right \|_{\mu,2}^2 \le n(n+2\mu+d) \|f \|_{\mu,2}^2, \quad \text{if $n$ is even}
  \label{eq:B-ineq0aball} \\
 \sum_{i=1}^d \left \| \sqrt{1-\|x\|^2} \partial_i f \right \|_{\mu,2}^2 \le (n(n+2\mu+d) -d+1) \|f \|_{\mu,2}^2, \quad \text{if $n$ is odd}.
  \label{eq:B-ineq0aOdd}
\end{align}
\end{thm}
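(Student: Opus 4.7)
The plan is to use the self-adjoint decomposition $\CD_\mu = \CD_\mu^{\mathrm{rot}} + \CD_\SS$ from \eqref{eq:CDdecomp} together with integration by parts, and then diagonalize via the orthogonal basis $\{Q_{\ell,m}^k(W_\mu)\}$ from \eqref{firstb}. First I would establish the two quadratic-form identities
\[
  \langle -\CD_\mu^{\mathrm{rot}} f,f\rangle_\mu = \sum_{i=1}^d \bigl\|\sqrt{1-\|x\|^2}\,\partial_i f\bigr\|_{\mu,2}^2, \qquad \langle -\CD_\SS f,f\rangle_\mu = \sum_{1 \le i<j \le d} \|D_{i,j} f\|_{\mu,2}^2.
\]
The first is immediate by integration by parts in $\CD_\mu^{\mathrm{rot}} = W_\mu^{-1}\sum_i \partial_i(W_{\mu+1}\partial_i)$, since $W_{\mu+1}$ vanishes on $\partial\BB^d$. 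The second follows from the radiality of $W_\mu$: $D_{i,j} W_\mu = 0$, so $D_{i,j}$ is skew-adjoint on $L^2(W_\mu,\BB^d)$. Adding these identities yields $\langle -\CD_\mu f,f\rangle_\mu$ on the right-hand side, which is exactly the left-hand side of \eqref{eq:B-ineq0ball}.

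For \eqref{eq:B-ineq0ball}, expanding $f = \sum_{k=0}^n \proj_k(W_\mu;f)$ and invoking \eqref{eq:eigenB} together with the orthogonality of projections gives
\[
  \langle -\CD_\mu f,f\rangle_\mu = \sum_{k=0}^n k(k+2\mu+d)\,\|\proj_k(W_\mu;f)\|_{\mu,2}^2 \le n(n+2\mu+d)\,\|f\|_{\mu,2}^2,
\]
because $k \mapsto k(k+2\mu+d)$ is strictly increasing on $k \ge 0$ for $\mu > -1$. Equality characterizes $f \in \CV_n(W_\mu,\BB^d)$.

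For \eqref{eq:B-ineq0aball} and \eqref{eq:B-ineq0aOdd} I would diagonalize $\CD_\mu^{\mathrm{rot}}$ on $\{Q_{\ell,m}^k\}$. Since $D_{i,j}$ annihilates any radial factor, $D_{i,j}(R(\|x\|) Y(x)) = R(\|x\|)\,D_{i,j} Y(x)$; applied iteratively and combined with \eqref{eq:LBe} for the homogeneous harmonic $Y_\ell^{k-2m}$, this gives $\CD_\SS Q_{\ell,m}^k = -(k-2m)(k-2m+d-2)\,Q_{\ell,m}^k$, and therefore $-\CD_\mu^{\mathrm{rot}} Q_{\ell,m}^k = \lambda_{k,m}\, Q_{\ell,m}^k$ with
\[
   \lambda_{k,m} := k(k+2\mu+d) - (k-2m)(k-2m+d-2).
\]
Expanding $f$ in this basis, the left-hand side of \eqref{eq:B-ineq0aball}--\eqref{eq:B-ineq0aOdd} becomes $\sum_{k,m,\ell}\lambda_{k,m}\,|c_{\ell,m}^k|^2\,\|Q_{\ell,m}^k\|_{\mu,2}^2$, so it remains to maximize $\lambda_{k,m}$ over $\{0 \le k \le n,\, 0 \le m \le k/2\}$. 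Setting $j := k-2m$, which satisfies $j \equiv k \pmod{2}$, for fixed $k$ the maximum in $m$ is attained at $j = 0$ if $k$ is even and $j = 1$ if $k$ is odd, giving $k(k+2\mu+d)$ or $k(k+2\mu+d)-(d-1)$ respectively.

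The main obstacle is the ensuing parity-sensitive optimization over $k \le n$. For $n$ even the maximum is $n(n+2\mu+d)$ at $(k,m)=(n,n/2)$, realized on the radial polynomial $P_{n/2}^{(\mu,(d-2)/2)}(2\|x\|^2-1)$, proving \eqref{eq:B-ineq0aball} sharply. For $n$ odd the candidate top value $n(n+2\mu+d)-(d-1)$ at $(n,(n-1)/2)$ is realized on $x_s P_{(n-1)/2}^{(\mu,d/2)}(2\|x\|^2-1)$, and one must check that it dominates the adjacent even-$k$ stratum, which reduces to $(n-1)(n-1+2\mu+d) \le n(n+2\mu+d)-(d-1)$, equivalent to $2n+2\mu \ge 0$, valid since $n \ge 1$ and $\mu > -1$. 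The $(d-1)$ correction in \eqref{eq:B-ineq0aOdd} thus encodes the fact that an odd-degree polynomial cannot be purely radial and must carry at least a degree-one spherical harmonic factor, whose Laplace--Beltrami eigenvalue is $-(d-1)$.
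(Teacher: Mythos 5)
Your proof is correct and takes essentially the same route as the paper: decompose $\CD_\mu = \CD_\mu^{\mathrm{rot}} + \CD_\SS$, express the left-hand sides as the associated quadratic forms, and diagonalize on the basis $\{Q_{\ell,m}^k\}$ to reduce the sharp constant to a parity-sensitive eigenvalue maximization. Your write-up is a little more careful at a few points: you evaluate $\langle -\CD_\mu f,f\rangle_\mu = \sum_k k(k+2\mu+d)\,\|\proj_k(W_\mu;f)\|_{\mu,2}^2$ directly rather than via Cauchy--Schwarz plus Parseval, you spell out the cross-degree comparison $(n-1)(n-1+2\mu+d)\le n(n+2\mu+d)-(d-1)$ for odd $n$ that the paper leaves to the phrase ``follow the preceding proof,'' and you correctly name the radial Jacobi polynomial $P_{n/2}^{(\mu,(d-2)/2)}(2\|x\|^2-1)$ as the even-$n$ extremizer --- the Gegenbauer polynomial $C_n^{(\mu+d/2)}(x_1)$ the paper first offers does not in fact saturate \eqref{eq:B-ineq0aball} when $d\ge 2$, a point the paper implicitly corrects only in its closing remark.
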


It should be noted that the inequalities \eqref{eq:B-ineq0aball} and \eqref{eq:B-ineq0aOdd}, as well as their sharpness,
were proved recently by A. Kro\'o \cite{K2022}. Particularly interesting is \eqref{eq:B-ineq0aOdd}, since the proof
of the other two follows more or less straightforwardly from the self-adjoint form of the differential operator $\CD_\mu$
in \eqref{eq:eigenB}, but more is needed for the proof of \eqref{eq:B-ineq0aOdd}.

The proof in \cite{K2022} is involved, in which one can see the trace of the spectral operator $\CD_\mu$, but only implicitly.
In the following, we provide an alternative proof that is based entirely on the decomposition \eqref{DmuDec1} of $\CD_\mu$
and, we believe, more intuitive. More importantly, our proof can be adopted for the stronger Bernstein inequalities that will
be discussed in the second subsection.

The essential tool for our proof is the following identity, which follows immediately from \eqref{eq:CDdecomp} by
integration by parts \cite[Theorem 2.1]{KPX},
\begin{align} \label{eq:adj-int}
  - \int_{\mathbb{B}^d} \CD_\mu f(x) g(x) W_\mu(x) \d x
     \,  = \sum_{i=1}^{d} \int_{\mathbb{B}^d}   (1-\|x\|^2) \partial_i f(x) \partial_i g(x) W_\mu(x) \d x& \\
      \quad +  \sum_{1\le i<j\le d} \int_{\mathbb{B}^d} D_{ij} f(x)  D_{ij} g(x)  W_\mu(x) \d x,& \notag
\end{align}
which implies, in particular, that $\mathcal{D}_{\mu}$ is self-adjoint. 

\bigskip\noindent
{\it Proof of Theorem \ref{thm:B-ineq0Ball}.}
Let $\l_n^\mu = -n(n+2\mu+d)$. Since $f$ is a polynomial of degree $n$ and $ \proj_j(W_\mu;f) \in \CV_n(W_\mu, \BB^d)$,
it follows from \eqref{eq:eigenB} that
$$
  f = \sum_{j=0}^n \proj_j(W_\mu;f)  \quad \hbox{and} \quad  \CD_\mu f = \sum_{j=0}^n \l_j^\mu \proj_j(W_\mu;f).
$$
Since $|\l_j^\mu | \le |\l_n^\mu |$ for $j \le n$, it follows by the Parseval identity that
\begin{align} \label{eq:pre-int_Dfg}
  \left\|\CD_\mu f \right\|_{\mu,2}^2 & = \sum_{j=0}^n (\l_j^\mu)^2  \left\|\proj_j(W_\mu;f) \right\|_{\mu,2}^2 \\
     & \le  (\l_n^{\mu} )^2 \sum_{j=0}^n  \left\|\proj_j(W_\mu;f) \right\|_{\mu,2}^2 =( \l_n^{\mu})^2 \|f\|_{\mu,2}^2. \notag
\end{align}
Consequently, by the Cauchy-Schwarz inequality, we deduce
\begin{equation} \label{eq:int_Dfg}
   \left| \int_{\mathbb{B}^d} \CD_\mu f(x)\cdot f(x) W_\mu(x) \d x \right| \le
     \left\|\CD_\mu f(x) \right\|_{\mu,2} \cdot \|f\|_{\mu,2} \le \l_n^{\mu} \|f\|_{\mu,2}^2.
\end{equation}
Setting $g = f$ in \eqref{eq:adj-int} and applying the above inequality, we have proved \eqref{eq:B-ineq0ball}, whereas
\eqref{eq:B-ineq0aball} is
an immediate consequence of \eqref{eq:B-ineq0ball}.
To see that \eqref{eq:B-ineq0aball} is sharp,  we consider the Gegenbauer polynomial
\begin{equation*}
  P_{e_1}^\mu(x) = C_n^{(\mu+\frac d 2)}(x_1),
\end{equation*}
which is the orthogonal polynomial of degree $n$ in $\CV_n(W_{\bk}, \triangle)$ by setting $\a_1 = n$ and $\a_2 \ldots = \a_{d} = 0$
for the orthogonal polynomial $P_\a(W_\mu)$ given in \eqref{secondb}.

To prove \eqref{eq:B-ineq0aOdd}, we consider polynomials $Q_{\ell, m}^{n}\left(W_{\mu}\right)$, defined in \eqref{firstb},
which form an orthogonal basis of $\CV_n(W_\mu, \BB^d)$. 
Since $D_{i,j}$ is an angular derivative, $D_{i,j} g(\|\cdot\|) = 0$ for the radius function $g(\|x\|)$, so is $\CD_\SS g(\|\cdot\|) =0$
for $\CD_\SS$ defined in \eqref{DmuDec1}. Thus,
\[
\CD_\SS  Q_{\ell, m}^{n}(x)=P_{m}^{\left(\mu, n-2 m+\frac{d-2}{2}\right)}\left(2\| x \|^{2}-1\right)
\CD_\SS Y_{\ell}^{n-2 m}(x).
\]
Because $\CD_\SS$ restricted on $\sph$ is the Laplace-Beltrami operator, by \eqref{eq:LBe}, we obtain
\[
\CD_\SS Y_{\ell}^{n-2 m} (x)= \|x\|^{n-2m} \Delta_0 Y_{\ell}^{n-2 m} (\xi)=-(n-2m)(n-2m+d-2)Y_{\ell}^{n-2m}(x).
\]
Consequently, by \eqref{eq:eigenB} and \eqref{eq:CDdecomp}, we conlude that
\[
   \mathcal{D}_{\mu}^{\mathrm{rot}}  Q_{\ell, m}^{n} = \lambda^{\mu}_{n,m}  Q_{\ell, m}^{n}, \quad 0\le \ell \le \dim \CH_{n-2m}^d,
   \quad 0 \le m \le n/2,
\]
where $\lambda^{\mu}_{n,m}= - n(n+2 \mu+d)+(n-2m)(n-2m+d-2)$. Since $|\lambda^\mu_{n,m}|$ is an increasing
function of $m$ and $0 \le m \le \frac{n-1}2$, it follows that
$$
\left |\l^{\mu}_{n,m} \right| \le n(n+2 \mu+d) - (d-1) = \left |\l^{\mu}_{n,\frac{n-1}{2}} \right|
$$
when $n$ is odd. Since $Q_{\ell, m}^{n}$ consists of an orthogonal basis of $\CV_n(W_\mu, \BB^d)$, it follows from
\eqref{eq:proj} and the Paseval identity that
\begin{align*}
   \left\|\CD_\mu^{\mathrm{rot}} \proj_n (W_\mu) \right\|_{L^2(W_\mu,\BB^d)}^2  = & \sum_{m, \ell} \left |\wh f_{\ell,m}^n\right |^2
    \left| \l_{m,n}^\mu \right | \cdot \left \| Q_{\ell, m}^{n}\right \|_{L^2(W_\mu,\BB^d)}^2  \\
    \le & \left |\l^{\mu}_{n,\frac{n-1}{2}} \right|  \sum_{m, \ell} \left |\wh f_{\ell,m}^n\right |^2
    \left \| Q_{\ell, m}^{n}\right \|_{L^2(W_\mu,\BB^d)}^2 \\
      = & \left |\l^{\mu}_{n,\frac{n-1}{2}} \right|  \left\| \proj_n (W_\mu) \right\|_{L^2(W_\mu,\BB^d)}^2.
\end{align*}
Consequently, using the identity derived from the self-adjointness of $\CD_\mu^{\mathrm{rot}}$, we can follow
the preceding proof, as given in \eqref{eq:pre-int_Dfg} and \eqref{eq:int_Dfg}, to establish the inequality \eqref{eq:B-ineq0aOdd}.
Moreover, this inequality is sharp since it becomes an identity if $f = Q_{\ell, \frac{n-1}{2}}^n$. Note that when $n$ is even,
then $\l^{\mu}_{n,\frac{n}{2}}=\l_n^\mu$, hence inequalities \eqref{eq:B-ineq0ball} and \eqref{eq:B-ineq0aball}  turn into
an equality for $Q_{\ell, \frac{n}{2}}^{n}$.
\qed

\subsection{New Decomposition of Spectral Operator and Bernstein Inequality}
In this subsection, we present another type of decomposition of the spectral operator $\mathcal{D}_{\mu}$,
which is characteristically different from \eqref{eq:CDdecomp} and is of interest in itself. It leads to several
new Bernstein inequalities, including those on the ball mentioned in the introduction.

\begin{thm}
For $d \ge 2$, the spectral operator $\CD_\mu$ on $\mathbb{B}^d$ satisfies
\begin{align} \label{dec}
\mathcal{D}_{\mu} = \frac{1}{W_\mu(x)} \left[ \frac{1}{\|x\|^d} \la x ,\nabla\ra \left(\|x\|^{d-2}(1-\|x\|^2) W_\mu(x)  \la x ,\nabla\ra \right)    \right] + \frac{1}{\|x\|^2} \CD_\SS.
\end{align}
\end{thm}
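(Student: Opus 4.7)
My plan is to verify the identity by rewriting both sides as differential operators in the Euler field $E := \la x, \nabla\ra$ and the flat Laplacian $\Delta$, and then matching coefficients. The strategy is essentially to recognize the first term on the right as the $1/W_\mu$ times the radial Sturm--Liouville operator (since $E$ acts as $r \partial_r$ on any function of $\|x\|$), while the second term is the spherical part divided by $\|x\|^2$; the two pieces should reassemble $\CD_\mu$ just as the Euclidean Laplacian is written in polar coordinates.

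First I would rewrite $\CD_\mu$ in closed form. Using $\sum_{i,j} x_i x_j \partial_i \partial_j = E^2 - E$ and $\sum_i \partial_i^2 = \Delta$, a direct regrouping of the definition of $\CD_\mu$ yields
\begin{equation*}
 \CD_\mu = \Delta - E^2 - (d+2\mu) E.
\end{equation*}
Next I would compute $\CD_\SS$ in the same language. Expanding $D_{i,j}^2 = x_i^2 \partial_j^2 + x_j^2 \partial_i^2 - 2 x_i x_j \partial_i \partial_j - x_i\partial_i - x_j\partial_j$ and summing over $i<j$, the first two sums combine to $\|x\|^2 \Delta - \sum_i x_i^2 \partial_i^2$, the mixed term contributes $-(E^2 - E - \sum_i x_i^2 \partial_i^2)$, and the last terms give $-(d-1)E$. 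Collapsing,
\begin{equation*}
 \CD_\SS = \|x\|^2 \Delta - E^2 - (d-2) E,
\end{equation*}
which is the standard radial-angular decomposition. In particular $\tfrac{1}{\|x\|^2} \CD_\SS = \Delta - \tfrac{1}{\|x\|^2}\bigl(E^2 + (d-2)E\bigr)$.

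Then I would unfold the first term on the right-hand side of \eqref{dec}. Because $E \|x\|^k = k \|x\|^k$ and $E (1-\|x\|^2)^{\mu+1} = -2(\mu+1) \|x\|^2 (1-\|x\|^2)^\mu$, the Leibniz rule gives
\begin{equation*}
 E\bigl[\|x\|^{d-2}(1-\|x\|^2)^{\mu+1}\bigr] = (d-2)\|x\|^{d-2}(1-\|x\|^2)^{\mu+1} - 2(\mu+1)\|x\|^d (1-\|x\|^2)^\mu.
\end{equation*}
Applying $E$ once more to the product with $Ef$, then dividing by $\|x\|^d W_\mu(x) = \|x\|^d (1-\|x\|^2)^\mu$, the radial term reduces to
\begin{equation*}
 \frac{1-\|x\|^2}{\|x\|^2}\bigl[(d-2) E + E^2\bigr] - 2(\mu+1) E.
\end{equation*}

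Adding the angular contribution and observing that the $1/\|x\|^2$ pieces cancel exactly, the right-hand side collapses to $\Delta - E^2 - (d-2)E - 2(\mu+1) E = \Delta - E^2 - (d+2\mu) E$, which is precisely $\CD_\mu$ by the first step. The main obstacle is purely algebraic bookkeeping: ensuring the Leibniz expansion of the weighted radial operator cleanly matches the $\|x\|^{-2}(E^2+(d-2)E)$ coming from $\CD_\SS$; once the polar-coordinate identity for $\Delta$ is in hand, the rest is a careful but routine check.
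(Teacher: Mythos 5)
Your proof is correct. Conceptually it shares the paper's skeleton---split $\CD_\mu$ into a radial Sturm--Liouville piece plus a spherical piece weighted by $\|x\|^{-2}$ and match---but the execution is genuinely different and more self-contained. The paper passes to spherical-polar coordinates $(r,\xi)$, quotes the polar form $\Delta = \partial_r^2 + \frac{d-1}{r}\partial_r + r^{-2}\Delta_0$ from the literature, and identifies $\CD_\SS$ with the Laplace--Beltrami operator $\Delta_0$, while you never leave Cartesian coordinates: you derive the closed forms $\CD_\mu = \Delta - E^2 - (d+2\mu)E$ and $\CD_\SS = \|x\|^2\Delta - E^2 - (d-2)E$ from the definitions (the latter being exactly the polar-Laplacian identity rewritten in terms of $E = \la x,\nabla\ra$), then unfold the weighted radial operator by the Leibniz rule and check that the $\|x\|^{-2}$ terms cancel. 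The trade-off is transparency versus brevity: your argument avoids any external citation and reduces the whole statement to operator algebra in $E$ and $\Delta$, whereas the paper's version is shorter once the standard polar and Laplace--Beltrami facts are granted.
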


\begin{proof}
Let $r=\|x\|$. Then starting from formula~\eqref{eq:CDdecomp}, a standard computation yields:
\begin{align*}
\mathcal{D}_{\mu}&= \frac{1}{W_{\mu}(x)}
\left[
\sum_{i=1}^{d} (\mu+1)(-2x_i) W_{\mu}(x) \partial_i + W_{\mu+1}(x) \partial_i^2 \right] + \CD_\SS \\[1em]
&= -2(\mu+1) \langle x, \nabla \rangle + \bigl( 1 - \|x\|^2 \bigr) \Delta + \CD_\SS \\[1em]
&= -2(\mu+1) r \frac{\partial}{\partial r} + (1 - r^2)
\left(  \frac{\partial^2}{\partial r^2} + \frac{d-1}{r} \frac{\partial}{\partial r} + \frac{1}{r^2} \Delta_0 \right) + \Delta_0,
\end{align*}
where the last equality uses \cite[Proposition 4.1.6]{DX} and we have used $\CD_\SS = \Delta_0$. Hence,
\[
\mathcal{D}_{\mu}=(1 - r^2) \frac{\partial^2}{\partial r^2}
+ \frac{d-1}{r} \frac{\partial}{\partial r}
- (2\mu + d + 1) r \frac{\partial}{\partial r}
+ \frac{1}{r^2} \Delta_0.
\]
Now, a quick computation shows that
\begin{align*}
\la x ,\nabla\ra \left(\|x\|^{d-2}(1-\|x\|^2) W_\mu(x)  \la x ,\nabla\ra \right)  = r \frac{\partial}{\partial r} \left(r^{d-1}(1-r^2)^{\mu+1}    \frac{\partial}{\partial r} \right) \\ = r^d (1-r^2)^\mu \left( (1 - r^2) \frac{\partial^2}{\partial r^2}
+ \frac{d-1}{r} \frac{\partial}{\partial r}
- (2\mu + d + 1) r \frac{\partial}{\partial r} \right).
\end{align*}
Comparing the two identities proves the stated identity.
\end{proof}

As an application of the new decomposition of $\CD_\mu$, we obtain alternative expressions for the
integral in \eqref{eq:adj-int}.

\begin{thm} \label{thm:adj2}
Let $f$ and $g$ be functions in $C^2(\mathbb{B}^d)$. Then
\begin{align*}
  -\int_{\mathbb{B}^d} \CD_\mu f(x) \cdot g(x) W_\mu(x) \d x
     \,= &  \int_{\mathbb{B}^d} \la x ,\nabla \ra f(x) \cdot \la x ,\nabla \ra g(x) (1-\|x\|^2)W_\mu(x) \frac{\d x}{\|x\|^2} \\
      &  +  \sum_{1\le i<j\le d} \int_{\mathbb{B}^d} D_{i,j} f(x)  D_{i,j} g(x)   W_\mu(x) \frac{\d x}{\|x\|^2}. \notag
\end{align*}
\end{thm}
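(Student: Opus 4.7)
The plan is to feed the decomposition \eqref{dec} into the integral $-\int_{\BB^d}\CD_\mu f\cdot g\,W_\mu\,\d x$ and integrate by parts in the two pieces separately, exactly mirroring how \eqref{eq:adj-int} is extracted from \eqref{eq:CDdecomp}. Write
\[
  -\int_{\BB^d}\CD_\mu f\cdot g\,W_\mu\,\d x = I_{\mathrm{rad}}+I_{\SS},
\]
where $I_{\mathrm{rad}}$ comes from the first summand of \eqref{dec} and $I_{\SS}$ from the second.

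For $I_{\mathrm{rad}}$, I would pass to polar coordinates $x=r\xi$ with $r=\|x\|$ and $\xi\in\sph$. Since $\la x,\nabla\ra=r\partial_r$ acts purely radially and $\d x=r^{d-1}\d r\,\d\sigma(\xi)$, the prefactor $\|x\|^{-d}\la x,\nabla\ra$ becomes $r^{-(d-1)}\partial_r$, and after the $W_\mu$'s cancel, the piece under the integral sign is
\[
  \partial_r\!\left(r^{d-1}(1-r^2)^{\mu+1}\partial_r f\right)g,
\]
with the volume element collapsing to $\d r\,\d\sigma$. A single integration by parts in $r$ gives the boundary term $\bigl[r^{d-1}(1-r^2)^{\mu+1}\partial_r f\cdot g\bigr]_0^1$, which vanishes because $d\ge 2$ (killing the $r=0$ endpoint through $r^{d-1}$) and $\mu>-1$ (killing the $r=1$ endpoint through $(1-r^2)^{\mu+1}$). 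Rewriting $(\partial_r f)(\partial_r g)=\|x\|^{-2}(\la x,\nabla\ra f)(\la x,\nabla\ra g)$ and absorbing the Jacobian back into $\d x$ yields
\[
  I_{\mathrm{rad}}=\int_{\BB^d}\la x,\nabla\ra f\cdot\la x,\nabla\ra g\,(1-\|x\|^2)W_\mu(x)\frac{\d x}{\|x\|^2}.
\]

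For $I_{\SS}$, the key observation is that $D_{i,j}\|x\|=0$, hence $D_{i,j}$ annihilates both $\|x\|^{-2}$ and $W_\mu(x)$. So for any smooth $h$,
\[
  \int_{\BB^d}h\,D_{i,j}^2 f\,\d x = -\int_{\BB^d}D_{i,j}h\cdot D_{i,j}f\,\d x,
\]
because $D_{i,j}(h\,D_{i,j}f)$ has zero integral (the usual computation $\int D_{i,j}(\cdot)\,\d x=0$ since $x_i\nu_j-x_j\nu_i$ vanishes on $\sph$ with $\nu=x$). Applying this with $h=\|x\|^{-2}g\,W_\mu$ collapses the $D_{i,j}$ onto $g$ alone and produces
\[
  I_{\SS}=\sum_{1\le i<j\le d}\int_{\BB^d}D_{i,j}f\cdot D_{i,j}g\,W_\mu(x)\frac{\d x}{\|x\|^2}.
\]
Summing $I_{\mathrm{rad}}$ and $I_{\SS}$ gives the stated identity.

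The only delicate point is the finiteness and justification of the $\|x\|^{-2}$ factor near the origin and of the integration by parts at $r=0$; both are handled by the constraint $d\ge 2$, which provides the $r^{d-1}$ in the Jacobian needed to absorb the radial singularity and annihilate the inner boundary term. The outer boundary at $r=1$ is controlled by the $(1-r^2)^{\mu+1}$ factor as in the proof of \eqref{eq:adj-int}. No boundary contributions survive in the angular integration by parts because $D_{i,j}$ is tangential to spheres. Since $f,g\in C^2(\BB^d)$ this is all rigorous, and the argument is essentially parallel to the proof of \eqref{eq:adj-int} in \cite{KPX}, with the new radial divergence form replacing $\sum_i\partial_i(W_{\mu+1}\partial_i\cdot)$.
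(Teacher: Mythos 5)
Your proof is correct and follows essentially the same route as the paper's: pass to spherical--polar coordinates for the radial summand of \eqref{dec}, integrate by parts once in $r$ with the boundary terms killed by $r^{d-1}$ (using $d\ge 2$) and $(1-r^2)^{\mu+1}$ (using $\mu>-1$), then convert back; and for the angular summand, exploit that $D_{i,j}$ annihilates the radial factors $\|x\|^{-2}$ and $W_\mu$ so the integration by parts lands on $g$ alone. The only cosmetic difference is that you justify $\int_{\BB^d}D_{i,j}(\cdot)\,\d x=0$ directly via the divergence theorem (noting $x_i\nu_j-x_j\nu_i=0$ on $\sph$), whereas the paper invokes the self-adjointness of $D_{i,j}$ in $L^2(\sph)$ from the cited reference --- these are equivalent.
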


\begin{proof}
We apply integration on the decomposition of $\CD_\mu$ given in \eqref{dec}. For the first term in the right-hand side of
\eqref{dec}, we use the spherical-polar variable and integrate by parts on the radial variable to obtain
\begin{align*}
    \int_{\mathbb{B}^d}&  \frac{1}{\|x\|^d} \left[ \la x ,\nabla\ra \left(\|x\|^{d-2}(1-\|x\|^2) W_\mu(x)  \la x ,\nabla\ra \right)    \right] f(x) \cdot g(x) W_\mu(x) \d x \\
    & \qquad = \int_{\mathbb{S}^{d-1}} \int_{0}^{1} \frac{1}{r} \left[ r \frac{\partial}{\partial r} r^{d-2} (1-r^2)^{\mu+1} r \frac{\partial}{\partial r}  \right] f(r \xi) g(r \xi) \d r \d \sigma(\xi) \\
    & \qquad  = \int_{\mathbb{S}^{d-1}} \int_{0}^{1}    r^{d-1} (1-r^2)^{\mu+1}  \frac{\partial}{\partial r}  f(r \xi) \frac{\partial}{\partial r}g(r \xi) \d r \d \sigma(\xi) \\
    &\qquad  =\int_{\mathbb{B}^d} \frac{1-\|x\|^2}{\|x\|^2} \la x ,\nabla\ra f(x) \la x ,\nabla\ra g(x) W_\mu(x) \d x.
  \end{align*}
The spherical part follows form the fact that $D_{i,j} g(\|\cdot\|) = 0$ for the radius function $g(\|x\|)$ and $\CD_{i,j}$
are self-adjoint in $L^2(\sph)$ \cite[Proposition 1.8.4]{DaiX}.
\end{proof}

The integral identity gives another proof that $\CD_\mu$ is self-adjoint. The identity and the new decomposition of
$\CD_\mu$ in \eqref{dec} are of interests in thier own. Like their counterparts on the simplex \cite[(2.11) and (2.12)]{GX},
they are somewhat unexpected. As an immediate application of the new integral identity, we use it in place of the
identity \eqref{eq:adj-int} and follow the proof of Theorem \ref{thm:B-ineq0Ball} to derive new Bernstein inequalities in
$L^2(W_\mu, \BB^d)$. The results are stated below.

\begin{thm} \label{thm:BInequality_new}
Let $d \ge 2$, $n = 0,1,2,\ldots$ and $f \in \Pi_n^d$. Then
\begin{align} \label{eq:BB-ineq1}
     \left \|\frac{\sqrt{1-\|x\|^2}}{\|x\|}  \la x ,\nabla \ra f\right \|_{\mu,2}^2
       +  \sum_{1\le i<j\le d} \left \| \frac{1}{\|x\|} \partial_{i,j} f \right\|_{\mu,2}^2 \le
            n(n+2\mu+d) \|f\|_{\mu,2}^2,
\end{align}
and the equality holds if and only if $f \in \CV_n(W_\mu, \BB^d)$. Furthermore, the following two inequalities are also sharp,
\begin{align}
\left \|\frac{\sqrt{1-\|x\|^2}}{\|x\|}  \la x ,\nabla \ra f\right \|_{\mu,2} \le \sqrt{n(n+2\mu+d)}  \|f\|_{\bk,2}, \quad \text{if $n$ is even} \label{eq:BB-ineq1a} \\
\left \|\frac{\sqrt{1-\|x\|^2}}{\|x\|}  \la x ,\nabla \ra f\right \|_{\mu,2} \le \sqrt{n(n+2\mu+d) -d+1}  \|f\|_{\bk,2} \quad \text{if $n$ is odd}. \label{eq:BB-ineq1b}
\end{align}
\end{thm}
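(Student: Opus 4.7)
The plan is to follow the proof of Theorem~\ref{thm:B-ineq0Ball} step by step, using the new integral identity of Theorem~\ref{thm:adj2} in place of \eqref{eq:adj-int}. Setting $g = f$ in Theorem~\ref{thm:adj2} yields
\[
-\int_{\BB^d}\CD_\mu f \cdot f\,W_\mu\,\d x = \Bigl\|\tfrac{\sqrt{1-\|x\|^2}}{\|x\|}\la x,\nabla\ra f\Bigr\|_{\mu,2}^2 + \sum_{1\le i<j\le d}\Bigl\|\tfrac{D_{i,j}f}{\|x\|}\Bigr\|_{\mu,2}^2.
\]
Decomposing $f\in\Pi_n^d$ as $\sum_{j=0}^n \proj_j(W_\mu;f)$ and using $\CD_\mu P = -j(j+2\mu+d)P$ on $\CV_j(W_\mu,\BB^d)$, Parseval's identity and Cauchy--Schwarz (exactly as in \eqref{eq:pre-int_Dfg}--\eqref{eq:int_Dfg}) bound the left-hand side by $n(n+2\mu+d)\|f\|_{\mu,2}^2$, with equality iff $f\in\CV_n(W_\mu,\BB^d)$. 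This delivers \eqref{eq:BB-ineq1} together with its equality characterization.

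For \eqref{eq:BB-ineq1a} when $n$ is even, I discard the non-negative spherical sum from \eqref{eq:BB-ineq1}. Sharpness is witnessed by the radial polynomial $f(x) = P_{n/2}^{(\mu,(d-2)/2)}(2\|x\|^2-1)$, namely the element of basis \eqref{firstb} with $m = n/2$ (available only when $n$ is even). This $f$ lies in $\CV_n(W_\mu,\BB^d)$, so \eqref{eq:BB-ineq1} becomes an equality; since $f$ is radial, every $D_{i,j}f$ vanishes identically, so the dropped spherical sum is zero and equality propagates all the way.

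The odd case \eqref{eq:BB-ineq1b} is where the new decomposition \eqref{dec} of $\CD_\mu$ earns its keep, mirroring the role of \eqref{eq:CDdecomp} in the proof of \eqref{eq:B-ineq0aOdd}. Rewriting the identity above as
\[
\Bigl\|\tfrac{\sqrt{1-\|x\|^2}}{\|x\|}\la x,\nabla\ra f\Bigr\|_{\mu,2}^2 = -\int_{\BB^d}\CD_\mu f\cdot f\,W_\mu\,\d x - \sum_{1\le i<j\le d}\Bigl\|\tfrac{D_{i,j}f}{\|x\|}\Bigr\|_{\mu,2}^2,
\]
I expand $f\in\CV_n(W_\mu,\BB^d)$ on the basis $\{Q^n_{\ell,m}\}$ from \eqref{firstb}. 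The crucial observation is that these polynomials are mutually orthogonal with respect to \emph{both} $W_\mu$ and $W_\mu/\|x\|^2$: distinct $m$ pair with spherical harmonic factors $Y^{n-2m}_\ell$ of distinct degrees, and the two weights differ only by the radial factor $\|x\|^{-2}$. Both terms on the right therefore factorize along the basis, and for each $(\ell,m)$ the spherical contribution equals $(n-2m)(n-2m+d-2)\|Q^n_{\ell,m}\|_{\mu/\|x\|^2}^2$. For odd $n$ every $n-2m\ge 1$, so $(n-2m)(n-2m+d-2)\ge d-1$, and the pointwise bound $\|x\|^{-2}\ge 1$ on $\BB^d$ gives $\|Q^n_{\ell,m}\|_{\mu/\|x\|^2}^2\ge\|Q^n_{\ell,m}\|_{\mu,2}^2$; combining these yields the claim on $\CV_n$. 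The main obstacle I anticipate is the extension from $\CV_n$ to the full $\Pi_n^d$: polynomials $Q^j_{\ell,m}$ of different total degrees $j$ but common spherical harmonic degree $j-2m$ fail to be orthogonal under $W_\mu/\|x\|^2$, generating cross-terms upon expansion. These should be absorbed by combining the $W_\mu$-orthogonality of $\{\proj_j f\}$ with the comparison $j(j+2\mu+d)\le n(n+2\mu+d)-(d-1)$, valid for every $j\le n$ when $n$ is odd and $\mu>-1$. Sharpness should then be verified on an extremal element of basis \eqref{firstb}, paralleling the role of $Q^n_{\ell,(n-1)/2}$ in the proof of \eqref{eq:B-ineq0aOdd}.
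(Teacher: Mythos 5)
Your handling of \eqref{eq:BB-ineq1} and of the even case \eqref{eq:BB-ineq1a} is correct and matches the paper (the paper's sharpness witness is the radial member of the basis \eqref{firstb} with $m=n/2$, exactly as you say). For \eqref{eq:BB-ineq1b} your argument on $\CV_n(W_\mu,\BB^d)$ is also correct: the $Q^n_{\ell,m}$ are simultaneously orthogonal for $W_\mu$ and for $W_\mu/\|x\|^2$, $\CD_\SS Q^n_{\ell,m}=-(n-2m)(n-2m+d-2)Q^n_{\ell,m}$, $n-2m\ge 1$ when $n$ is odd, and $\|x\|^{-2}\ge 1$, which gives the $(d-1)$ bonus. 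This is in the spirit of what the paper intends by ``follows almost verbatim.''

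The gap is in the passage to $\Pi_n^d$, and the comparison you propose there, $j(j+2\mu+d)\le n(n+2\mu+d)-(d-1)$ for \emph{every} $j\le n$, is false at $j=n$; the $(d-1)$ must come from the spherical term for the degree-$n$ piece, exactly as in your $\CV_n$ argument, so a naive degree-by-degree bound cannot absorb the cross-terms. The correct way to finish is to decompose by spherical harmonic degree $k$ rather than by total degree $j$: write
$f(x)=\sum_{k,\ell}f_{k,\ell}(r)\,Y_\ell^k(\xi)$ in spherical--polar coordinates $x=r\xi$, with $f_{k,\ell}(r)=r^k q_{k,\ell}(r^2)$ and $\deg q_{k,\ell}\le\lfloor(n-k)/2\rfloor$. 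Because $\frac{\sqrt{1-r^2}}{r}\la x,\nabla\ra=\sqrt{1-r^2}\,\partial_r$ is purely radial, both $\|f\|_{\mu,2}^2$ and the left-hand side of \eqref{eq:BB-ineq1b} split orthogonally over $(k,\ell)$ — there are no cross-terms. On the $(k,\ell)$ block, the $Q$-basis expansion gives
$-\int\CD_\mu\bigl[f_{k,\ell}Y^k_\ell\bigr]\cdot f_{k,\ell}Y^k_\ell\,W_\mu\le\bigl(\max\{(k+2m)(k+2m+2\mu+d):k+2m\le n\}\bigr)\,\|f_{k,\ell}\|^2_{\mu,2}$,
and $\text{spherical}_{k,\ell}=k(k+d-2)\int_0^1|f_{k,\ell}|^2(1-r^2)^\mu r^{d-3}\,\d r\ge k(k+d-2)\|f_{k,\ell}\|_{\mu,2}^2$.
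If $k\ge 1$ is odd, $\max(k+2m)=n$ and $k(k+d-2)\ge d-1$; if $k$ is even (including $k=0$), $\max(k+2m)=n-1$ and one uses $(n-1)(n-1+2\mu+d)\le n(n+2\mu+d)-(d-1)$, whose gap is $2n+2\mu>0$ for $\mu>-1$. Either way $\text{radial}_{k,\ell}\le\bigl(n(n+2\mu+d)-d+1\bigr)\|f_{k,\ell}\|^2_{\mu,2}$, and summing over $(k,\ell)$ finishes the proof. This block decomposition is what replaces the eigenfunction argument of Theorem~\ref{thm:B-ineq0Ball}: unlike $\CD_\mu^{\mathrm{rot}}$, the new radial operator in \eqref{dec} does \emph{not} act as a scalar on $Q^n_{\ell,m}$ (it produces a $\|x\|^{-2}$ term), so one cannot literally carry over the bound on $\|\CD_\mu^{\mathrm{rot}}f\|_{\mu,2}$.
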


\begin{proof}
Using the integral identity in Theorem \ref{thm:adj2} instead of \eqref{eq:adj-int}, the proof of these inequalities follows
from that of Theorem \ref{thm:B-ineq0Ball} almost verbatim. In particular,  the polynomials attaining equalities in
\eqref{eq:BB-ineq1a} and \eqref{eq:BB-ineq1b} are the same ones for \eqref{eq:B-ineq0aball} and \eqref{eq:B-ineq0aOdd}
in Theorem \ref{thm:B-ineq0Ball}.
\end{proof}

\begin{rem}
It is worth pointing out that, in terms of the spherical-polar coordinates $x = r \xi \in \BB^d$ with $0 \le r \le 1$ and $\xi\in \sph$,
the function in the left-hand side of \eqref{eq:BB-ineq1a} and \eqref{eq:BB-ineq1b} become
$$
\frac{\sqrt{1-\|x\|^2}}{\|x\|}  \la x ,\nabla \ra f = \sqrt{1-r^2} \frac{\d f}{\d r},
$$
so that these Bernstein inequalities are compatible with the classical Bernstein inequality of one variable.
\end{rem}

We note that the inequalities \eqref{eq:BB-ineq1a} and \eqref{eq:BB-ineq1b} are different types of Bernstein inequalities
from those in \eqref{eq:B-ineq0aball} and \eqref{eq:B-ineq0aOdd}. They imply immediately inequalities for $D_{i,j}$.
While the one derived from Theorem \ref{thm:BInequality_new} is stronger because of the factor $\frac{1}{\|x\|}$, it is
weaker than the one in \eqref{pij} with $p =2$, which has the factor $\frac{1}{\sqrt{x^2_i+x^2_j}} \ge \frac{1}{\|x\|}$, if we disregard the
constant in the right-hand side.

\section*{Acknowledgment}
This work was written almost entirely while the first author was visiting
the Department of Mathematics at the University of Oregon during the fall of 2025.
He thanks the department for its warm hospitality.

{The authors thank two anonymous referees for their helpful comments, especially one referee's
suggestion of finding examples to quantify the sharpness of the new inequalities, which led us to
Examples \ref{exam1} and \ref{exam2}. }


\end{document}